\documentclass[a4paper,reqno,11pt]{amsart}
\usepackage{mathrsfs}
\usepackage[all]{xy}
\usepackage{txfonts}
\usepackage{amsfonts}
\usepackage{amssymb}
\usepackage{amsfonts,amsmath,amsthm,amssymb,stmaryrd,color}
\usepackage[numbers,sort&compress]{natbib}
\usepackage{hyperref}

\newtheorem{Theorem}{Theorem}[section]
\newtheorem{Lemma}{Lemma}[section]
\newtheorem{Proposition}{Proposition}[section]
\newtheorem{Remark}{Remark}[section]

\numberwithin{equation}{section}
\usepackage[left=1 in, right=1 in,top=1 in, bottom=1 in]{geometry}

\def\XXint#1#2#3{{\setbox0=\hbox{$#1{#2#3}{\int}$ }
\vcenter{\hbox{$#2#3$ }}\kern-.6\wd0}}

\DeclareMathOperator{\diver}{div}

\def\r3{\mathbb{R}^3}

\makeatletter
\@namedef{subjclassname@2020}{\textup{2020} Mathematics Subject Classification}
\makeatother

\allowdisplaybreaks

\begin{document}

\title[ The 3D inhomogeneous incompressible viscoelastic system ]{\bf G\MakeLowercase{lobal solutions of the 3}D \MakeLowercase{inhomogeneous incompressible viscoelastic system without structure assumptions}}
\author{Chengfei Ai}
\address{School of Mathematics and Statistics, Yunnan University,  Kunming, Yunnan 650091, China.}
\email[C.F. Ai]{aicf5206@163.com}

\author{Mengxing Bei}
\address{School of Mathematics and Statistics, Yunnan University,  Kunming, Yunnan 650091, China.}
\email[M. Bei]{hem135428@163.com}

\author{Yong Wang$^\ast$}
\address{ School of Mathematical Sciences, South China Normal University, Guangzhou, Guangdong 510631, China.}
\email[Y. Wang]{wangyongxmu@163.com}

\thanks{$^\ast$Corresponding author: Yong Wang,\ wangyongxmu@163.com}

\begin{abstract}
In this paper, we prove the global existence of strong solutions for the inhomogeneous incompressible viscoelastic system without any additional structure assumptions on $\mathbb{R}^{3}$. Unlike the time weighted energy method presented by Ai and Wang (Nonlinear Anal. 254 (2025), 113747.), by replacing $H^{-1}$ conditions with certain $L^{1}$ conditions on initial data, we need to develop some new transformation techniques for the system \eqref{1.1} and make use of elegant spectral analysis method to capture an enhanced time-decay rate of the velocity field $u$, which is essential to establish the uniform bounds of the density and deformation tensor.
\end{abstract}

\keywords{Inhomogeneous viscoelastic system; Spectral method; Strong solutions.}

\subjclass[2020]{35Q35; 35B40; 76A10.}
\maketitle

\section{Introduction}
In this paper, we consider the Cauchy problem of the three-dimensional (3D) inhomogeneous incompressible viscoelastic system:
\begin{equation}\label{1.1}
\begin{cases}
\tilde{\rho}_{t}+u\cdot\nabla\tilde{\rho}=0,\\
\tilde{\rho} u_{t}+\tilde{\rho}u\cdot\nabla u+\nabla p=\mu\Delta u+c^{2}\diver(\tilde{\rho} \mathbb{F}\mathbb{F}^{\top}),\\
\mathbb{F}_{t}+ u\cdot\nabla \mathbb{F}=\nabla u\mathbb{F}, \\
\diver u=0, \qquad\qquad (x,t)\in\mathbb{R}^{3}\times\mathbb{R}^{+},
\end{cases}
\end{equation}
which is subject to the initial data
\begin{equation}\label{1.1'}
(\tilde{\rho},u,\mathbb{F})(x,t)\mid_{t=0}=(\tilde{\rho}_{0}(x),u_{0}(x),\mathbb{F}_{0}(x))\to (1,0,\mathbb{I})\quad \mbox{as}\quad x\to\infty.
\end{equation}
Here $\tilde{\rho}>0$ is the fluid density, $u\in\mathbb{R}^{3}$ is the velocity field, $\mathbb{F}\in \mathbb{M}^{3\times3}$ (the set of $3\times 3$ matrices with positive determinants) is the deformation gradient of fluids, $p$ is the pressure (the Lagrange multiplier), and $\mathbb{I}$ is the identity matrix. The constants $\mu>0$ and $c>0$ represent the shear viscosity and the speed of elastic wave propagation, respectively. The superscript $\top$ represents the transpose.

As an important part of the classical non-Newtonian fluids with elastic property. Viscoelastic fluids exhibit a combination of both fluid and solid characteristics, due to the physical importance and mathematical challenges, which have received a great deal of interest. When the density is constant (namely,  $\tilde{\rho}\equiv1$) in (\ref{1.1}), the following the incompressible version of system (\ref{1.1}) first established by Lin et al. \cite{Lin-Liu-Zhang2005} by an energetic variational approach (cf. \cite{Eisenberg-Hyon-Liu2010,Giga-Kirshtein-Liu2018,Xu-Sheng-Liu2014}) :
\begin{equation}\label{01.1}
\begin{cases}
u_{t}+u\cdot\nabla u+\nabla p=\mu\Delta u+c^{2}\diver(\mathbb{F}\mathbb{F}^{\top}),\\
\mathbb{F}_{t}+ u\cdot\nabla \mathbb{F}=\nabla u\mathbb{F}, \\
\diver u=0, \qquad\qquad (x,t)\in\mathbb{R}^{n}\times\mathbb{R}^{+},~~(n=2,3).
\end{cases}
\end{equation}
Let $\mathbb{E}=\mathbb{F}-\mathbb{I}$ be the perturbation for the deformation tensor $\mathbb{F}$ in (\ref{01.1}) with respect to the identity matrix $\mathbb{I}$ as its equilibrium state. Lin et al. \cite{Lin-Liu-Zhang2005} considered an auxiliary vector field with the ``div" structure $\diver\mathbb{E}^{\top}=0$, then they first proved the local and global existence (with small initial data) of classical solutions to the two-dimensional case. Later, Lei et al. \cite{Lei-Liu-Zhou2008} investigated the following ``curl" structure
\begin{equation}\label{01.4}
\nabla_{k}\mathbb{E}^{ij}-\nabla_{j}\mathbb{E}^{ik}=\mathbb{E}^{lj}\nabla_{l}\mathbb{E}^{ik}-\mathbb{E}^{lk}\nabla_{l}\mathbb{E}^{ij},
\end{equation}
which implies $\nabla\times \mathbb{E}$ is a high order term.
Then they proved the existence of global small solutions in 2 and 3 space dimensions via the ``div" structure and the ``curl" structure. Motivated by Sideris and Thomases \cite{Sideris-Thomases2005}, Chen and Zhang \cite{Chen-Zhang2006} established another ``curl-free" structure $\nabla\times(\mathbb{F}^{-1}-\mathbb{I})=0$ to consider the same problem. Based on the above ``div-curl" structure conditions, Lin and Zhang in \cite{Lin-Zhang2008} overcame the difficulty of undefined boundary condition for the deformation tensor $\mathbb{F}$, then they considered the initial-boundary value problems in bounded domain. Later, Jiang et al. \cite{Jiang-Jiang-Wu2017} investigated the stabilizing effect of elasticity in the Rayleigh-Taylor (RT) problem of stratified immiscible viscoelastic fluids. Moreover, the free boundary problems were also considered in \cite{DiIorio-Marcati-Spirito2020,Huang-Yao-You2025}. For more related mathematical research work, readers can refer to \cite{Hu-Lin2016,Hu-Wu2015,Jiang-Jiang2021,Lai-Lin-Wang2017,Lei-Liu-Zhou2007,Lei-Zhou2005}  in the Sobolev framework and \cite{He-Zi2021,Zhang-Fang2012,Fang-Zhang-Zi2018} in the Besov framework.

For the compressible  case corresponding to (\ref{1.1}), the analysis becomes much more complicated. Let's first briefly review the following three conditions are essential to the proof of the global existence results:
\begin{equation}\label{0.3}
\tilde{\rho}\det \mathbb{F}=1;
\end{equation}
\begin{equation}\label{0.4}
\diver (\tilde{\rho}\mathbb{F}^{\top})=0;
\end{equation}
\begin{equation}\label{0.5}
\mathbb{F}^{lk}\nabla_{l}\mathbb{F}^{ij}-\mathbb{F}^{lj}\nabla_{l}\mathbb{F}^{ik}=0.
\end{equation}
Here (\ref{0.4}) and (\ref{0.5}) are also called ``div-curl" structure for the compressible viscoelastic fluids. Utilizing the conditions (\ref{0.3})--(\ref{0.5}), Qian and Zhang \cite{Qian-Zhang2010} proved the local large and global small solutions of the Cauchy problem in critical Besov spaces (a functional space in which the system is scaling invariant); see also \cite{Hu-Wang2011,Han-Zi2020}. In the meanwhile, Hu and Wang \cite{Hu-Wang2010} first proved the local existence of strong solutions with large initial data , and then Hu-Wu \cite{Hu-Wu2013} and Fu et al. \cite{Fu-Huang-Wang2023} the global existence of and the long-time behavior of strong solutions to the Cauchy problem of the compressible viscoelastic fluids with the initial perturbation in the Sobolev framework. When studying the initial boundary value problem for the  compressible viscoelastic systems, the main difficulty arises since there is no boundary condition for the deformation tensor $\mathbb{F}$. To overcome it, Qian \cite{Qian2011} introduced a physically reasonable function $\psi=(\psi^{1},\cdot\cdot\cdot,\psi^{d})$ such that $\mathbb{F}^{ij}=\nabla_{j}\psi^{i}$, which satisfies
\begin{equation*}
\psi_{t}+v=-v\cdot\nabla\psi,
\end{equation*}
and $\psi|_{\partial\Omega}=0,$ since $v|_{\partial\Omega}=0$. Combining the tangential derivatives estimation with the normal derivatives estimation, the authors investigated the initial boundary value problem for the  compressible viscoelastic systems. Subsequently, by virtue of the theory of the maximal regularities, Hu et al. \cite{Hu-Wang2015} considered the same problem and Chen et al. \cite{Chen-Wu2018} proved the exponential decay rates of strong solutions in the $H^{2}$-Sobolev space. When studying the zero shear viscosity case, Hu and Zhao \cite{Hu-zhao2020,Hu-zhao020} justified that the strictly positive volume viscosity restrained the shock formation by establishing the global existence of the classical solutions. As mentioned in \cite{Bresch-Jabin2018}, it is important to consider the compressible viscoelastic system with the general pressure law, including the case $p'(\bar{\rho})\leq0$. Liu et al. \cite{Liu-Meng-Wu-Zhang2023} considered studied the compressible viscoelastic flows with the general pressure law and the density-dependent viscosity coefficients, then the authors proved the global existence and uniqueness of the strong solution in the critical Besov spaces. Hu et al. \cite{Hu-meng-Zhang2025} studied the three dimensional compressible viscoelastic flows with zero shear viscosity, and a general class of pressure laws. By reformulating the systems with the new perturbation variables $(\rho-1,u,\mathbb{F}-\frac{1}{\rho})$ and $(\rho-1,u,\mathbb{F}-\mathbb{I})$ to deal with the compressible and incompressible parts separately and using the elegant weighted energy method, then the authors established global existence of classical solutions for the Cauchy problem. For more related results with regard to the compressible viscoelastic model, please refer to \cite{Gu-Wang-Xie2024,Hu-Ou-Wang-Yang2023,Pan-Xu-Zhu2022,Tan-Wang-Wu2020,Zhu2020,Wu-Wang2023} and the literature therein. 

Recently, by using similar processing methods for homogeneous incompressible and compressible cases, under the identities (\ref{0.3})--(\ref{0.5}), some remarkable results can be found in \cite{Fang-Han-Zhang2014,Han2016,Hu-Wang2009} for the inhomogeneous incompressible viscoelastic system \eqref{1.1}. However, without using the ``div-curl" structural identities \eqref{0.4}-\eqref{0.5}, it is also possible to prove the global well-posedness of the Cauchy problem for the inhomogeneous incompressible viscoelastic system \eqref{1.1}, which is a new and difficult problem. Significantly, by treating the wildest ``nonlinear term" as ``linear term" through an elegant time-weighted energy framework, Zhu et al. \cite{Chen-Zhu2023,Zhu2018,Zhu2022,Pan-Xu-Zhu2022} obtained the global existence of small solutions to the homogeneous incompressible and compressible viscoelastic system in Sobolev spaces and in critical Besov spaces respectively. Later, we extended the previous results to the 3D and 2D inhomogeneous incompressible system \eqref{1.1} by introducing a new effective tensor to transform the original system into a suitable dissipative one in \cite{Ai-Wang2025,Ai-Wang-Wu2026}. As pointed out in the Remark 1.2 of \cite{Chen-Zhu2023}, if the initial data only belongs to the non-negative index Sobolev space, the existence of global classical solutions remains unknown. Fortunately, Zhu et al. \cite{Huang-Liu-Zhu2026} employed spectral analysis techniques to achieve an enhanced time-decay rate of the velocity field $u$ under certain $L^{1}$ conditions rather than $H^{-1}$ conditions on initial data in \cite{Zhu2022}. Then they proved the global strong solutions of the Cauchy problem for the 3D compressible viscoelastic equations without any additional structure assumptions. Inspired by the new methods in \cite{Huang-Liu-Zhu2026}, we continue to consider the incompressible inhomogeneous viscoelastic system (\ref{1.1}) without any additional structure assumptions. But our results are not completely parallel to the results in \cite{Huang-Liu-Zhu2026}. The main difficulties are as follows: since the density in the system \eqref{1.1} is not constant at all, the velocity equation in \eqref{1.1} is a parabolic equation with the variable coefficient, and thus the pressure term $\nabla p$ cannot be directly eliminated through the Helmholtz projection operator. In order to overcome these difficulties mentioned above, we need to develop some new transformation techniques for the system \eqref{1.1} and make use of spectral analysis methods. Specifically, the key point is to introduce the effective tensor $G$ defined in \eqref{2.1}, then we can transform the system (\ref{1.1}) into a suitable dissipative system (\ref{2.4}). To handle the pressure term $\nabla p$, another key idea is the transformation of \eqref{3.6}. Utilizing spectral analysis techniques to capture an enhanced time-decay rate of the velocity field $u$, we can obtain the uniform bounds for the Sobolev norms of $\tilde{\rho}-1$ and $\mathbb{F}-\mathbb{I}$. Lastly, we extend the novel ideas in \cite{Huang-Liu-Zhu2026} into the inhomogeneous incompressible case, and prove the existence of global small solutions for the Cauchy problem without any additional structure assumptions on $\mathbb{F}$ in Sobolev space.

\medskip

\textbf{Notation.} In this paper, $C$ denotes a general constant may vary in different estimate. If the dependence need to be explicitly stressed, some notations like $C_0,C_1$ will be used. $a\lesssim b ~(a\gtrsim b)$ means that both $a\leq C b~(a\geq C b)$ for some constant $C>0$, while $a\sim b$ means that both $a\lesssim b$ and $a\gtrsim b$. Let $\nabla^{k}=\partial_{x}^{k}$ with an integer $k\geq0$ be the usual spatial derivatives of order $k$. Moreover, for $s<0$ or $s$ is not a positive integer, $\nabla^{s}$ stands for $\Lambda^{s}$, that is,
\begin{equation*}
\nabla^{s}f=\Lambda^{s}f:=\mathscr{F}^{-1}(|\xi|^s\mathscr{F}f),
\end{equation*}
where $\mathscr{F}$ is the usual Fourier transform operator and $\mathscr{F}^{-1}$ is its inverse (see e.g., \cite{Bahouri-Chemin-Danchin2011}). We use $\dot{H}^s(\mathbb{R}^n)$ $(s\in\mathbb{R})$ to denote the homogeneous Sobolev spaces on $\mathbb{R}^n$ with the norm $\|\cdot\|_{\dot{H}^s}$ defined by $\|f\|_{\dot{H}^s}:=\|\Lambda^sf\|_{L^2}$. And $H^s(\mathbb{R}^n)$ and $L^{p}(\mathbb{R}^n)$ are the usual Sobolev and Lebesgue spaces with the norm $\|\cdot\|_{H^s}$ and the norm $\|\cdot\|_{L^{p}}$, respectively. Throughout the paper, $\left\langle f,g\right\rangle_{X}$ denotes the $X-$inner product of $f$ and $g$.

For simplicity, we shall always use the Einstein convention of summation, and we do not distinguish functional spaces when scalar-valued or vector-valued functions are involved.

Now we are in a position to present the main result.

\begin{Theorem}\label{th1.1}
Suppose that the initial data $(\tilde{\rho}_{0}, u_{0}, \mathbb{F}_{0})$ with $\diver u_0=0$ satisfies for some sufficiently small constant $\delta>0$,
\begin{equation}\label{0.7}
\|u_{0}\|_{H^{2}}+\|\tilde{\rho}_{0}-1\|_{H^{2}}+\|G_{0}\|_{H^{2}}\leq\delta,
\end{equation}
and
\begin{equation}\label{0.8}
\|u_{0}\|_{L^{1}}+\|G_{0}\|_{L^{1}}=M_{0}<+\infty.
\end{equation}
 Then the Cauchy problem \eqref{1.1}--\eqref{1.1'} admits a unique global solution $(\tilde{\rho}, u, \mathbb{F})(t)$ such that
\begin{align}\label{0.9}
&\sup_{0\leq t\leq \infty}\big[\|(\tilde{\rho}-1,\mathbb{F}-\mathbb{I})(t)\|_{H^{2}}^{2}+(1+t)^{\sigma}\|u(t)\|_{H^{2}}^{2}\big]
+\int_{0}^{\infty}(1+t)^{\sigma}\|\nabla u(t)\|_{H^{2}}^{2}dt \nonumber \\
&\lesssim\|(u_{0},\tilde{\rho}_{0}-1,\mathbb{F}_{0}-\mathbb{I})\|_{H^{2}}^{2}+M_{0}^{2},
\end{align}
where $G:=\tilde{\rho}\mathbb{F}\mathbb{F}^{T}-\mathbb{I}, \sigma\in(1,\sigma_{0})$ and $1<\sigma_{0}<\frac{3}{2}$.
\end{Theorem}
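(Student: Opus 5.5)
We prove Theorem \ref{th1.1} by the standard scheme: local existence plus a priori estimates, closed by a continuity argument. The estimates are carried out for a reformulated system in the unknowns $(\rho,u,G)$, where $\rho=\tilde\rho-1$ and $G=\tilde\rho\mathbb{F}\mathbb{F}^{\top}-\mathbb{I}$. The first two equations of \eqref{1.1} show that $\tilde\rho\mathbb{F}\mathbb{F}^{\top}$ satisfies
\[
G_t+u\cdot\nabla G=\nabla u(\mathbb{I}+G)+(\mathbb{I}+G)\nabla u^{\top},
\]
because $\tilde\rho$ is transported. The momentum equation then becomes
\[
\tilde\rho u_t+\tilde\rho u\cdot\nabla u+\nabla p-\mu\Delta u=c^2\diver G .
\]
At the linear level (with $\tilde\rho\equiv1$ and the projector $\mathbb{P}$), this is the coupled system $u_t-\mu\Delta u=c^2\mathbb{P}\diver G$, $G_t=\nabla u+\nabla u^{\top}+\dots$. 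Its natural dissipative combination is $\mathbb{P}\diver G$. In this combination $G$ itself does not decay, but $u$ and $\mathbb{P}\diver G$ do. We therefore work with the energy functional
\[
\mathcal{E}(t)=\|(\rho,G)\|_{H^2}^2+(1+t)^{\sigma}\|u\|_{H^2}^2
\]
with dissipation $\int(1+t)^{\sigma}\|\nabla u\|_{H^2}^2$. Under the ansatz $\sup\mathcal{E}+\int\mathcal{D}\le 2C_1(\delta^2+M_0^2)$ we aim to recover the same bound with constant $C_1$.

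\textbf{Step 1 (uniform bounds for $\rho$ and $G$).} Since $\rho$ is transported by a divergence-free field, $\|\rho\|_{L^p}$ is conserved. For $\nabla\rho,\nabla^2\rho$ we obtain $\frac{d}{dt}\|\nabla\rho\|_{H^1}\lesssim\|\nabla u\|_{H^2}\|\nabla\rho\|_{H^1}$, so by Gronwall $\|\rho\|_{H^2}$ stays bounded provided $\int_0^\infty\|\nabla u\|_{H^2}dt$ is small. This time integral is controlled by Cauchy--Schwarz:
\[
\int_0^\infty\|\nabla u\|_{H^2}\,dt\le\Big(\int(1+t)^{-\sigma}dt\Big)^{1/2}\Big(\int(1+t)^{\sigma}\|\nabla u\|_{H^2}^2dt\Big)^{1/2},
\]
and the first factor is finite because $\sigma>1$. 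The same argument applied to the $G$-equation gives $\|G\|_{H^2}^2\lesssim\|G_0\|_{H^2}^2+\text{small}$. From $G$ and $\rho$ we then recover $\mathbb{F}-\mathbb{I}$ in $H^2$; this needs only $\det\mathbb{F}>0$ and smallness, and does not use any structural identity. This is precisely where the enhanced rate $\sigma>1$ is indispensable.

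\textbf{Step 2 (time-weighted energy estimates for $u$).} We take the $H^2$ inner product of the momentum equation with $u$, coupled with the $G$ equation paired against $c^2 G$. In this pairing the linear terms $c^2\langle\diver G,u\rangle$ and $-c^2\langle \nabla u+\nabla u^{\top},G\rangle$ cancel. To handle the variable coefficient $\tilde\rho$ at the highest order, we divide the momentum equation by $\tilde\rho$ and treat the commutators $(\frac1{\tilde\rho}-1)(\nabla p-\mu\Delta u-c^2\diver G)$ as perturbations. The pressure is recovered from the elliptic equation $\diver(\tilde\rho^{-1}\nabla p)=\dots$. Multiplying the resulting identity by $(1+t)^{\sigma}$ produces the bad term $\sigma(1+t)^{\sigma-1}\|u\|_{H^2}^2$. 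For the higher derivatives this term is absorbed by splitting into low and high frequencies. The low-frequency part $\|u^{L}\|_{L^2}^2$ requires the decay $\|u(t)\|_{L^2}^2\lesssim(1+t)^{-\sigma_0}$ with $\sigma_0>\sigma$; this makes $(1+t)^{\sigma-1}\|u\|_{L^2}^2$ integrable.

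\textbf{Step 3 (spectral decay, the main obstacle).} To obtain the enhanced decay we rewrite the system as a linear problem with source terms. The rewriting is
\[
u_t-\mu\Delta u-c^2\mathbb{P}\diver G=\mathbb{P}N_1,\qquad G_t-(\nabla u+\nabla u^{\top})=N_2,
\]
where $N_1$ collects $-(\tilde\rho-1)u_t-\tilde\rho u\cdot\nabla u$ and the pressure remainder. Handling these terms is the purpose of the transformation that moves $\nabla p$ into a projected form with a quadratic remainder. We then analyse the Green matrix in Fourier variables for the pair $(u,\mathbb{P}\diver G)$. Its symbol has eigenvalues $\lambda_\pm$ with $\lambda_\pm\sim-|\xi|^2$ and $\lambda_\pm\sim-c^2/\mu$ at low frequency. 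Combining this with the $L^1$ assumption \eqref{0.8}, we expect $\|u^{L}\|_{L^2}\lesssim(1+t)^{-5/4}M_0$. The rate is $5/4$ rather than $3/4$ because $u$ is driven by $\diver G$ and therefore gains one factor of $|\xi|$. This gives $\|u\|_{L^2}^2\lesssim(1+t)^{-5/2}$, which exceeds $\sigma_0<3/2$ with ample room.

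The difficulty lies in the Duhamel term. The nonlinearity $N_2$ contains $\nabla u\, G$, where $G$ does not decay. This term must be estimated in $L^1$ by $\|\nabla u\|_{L^2}\|G\|_{L^2}$, and the extra factor $|\xi|$ in the Green function must be used so that the time convolution against $(1+t-s)^{-5/4}$ still yields $(1+t)^{-\sigma_0/2}$ after invoking the ansatz $\int(1+s)^{\sigma}\|\nabla u\|^2ds<\infty$. We would first attempt this by Cauchy--Schwarz in time, which is the reason for the restriction $\sigma_0<3/2$. The terms $(\tilde\rho-1)u_t$ would be replaced via the equation itself.

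Finally, combining Steps 1--3 with smallness of $\delta$ closes the bootstrap. Global existence then follows from the local well-posedness theory and the continuity argument. Uniqueness follows from a standard $L^2$ difference estimate.
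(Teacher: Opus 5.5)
Your proposal has genuine gaps, and the most serious is in Step 3, which is the core of the argument.

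\textbf{Step 3: the decay mechanism.} At low frequency the eigenvalues are $\lambda_{1,2}=\frac{-|\xi|^2\pm\sqrt{|\xi|^4-4|\xi|^2}}{2}\approx-\frac{|\xi|^2}{2}\pm i|\xi|$; the value $-c^2/\mu$ belongs to the high-frequency branch. Consequently $\widehat K_1\approx e^{-|\xi|^2t/2}\cos(|\xi|t)$ carries no factor $|\xi|$. In $\widehat K_2=|\xi|G_1(t)$ one has $|G_1|\lesssim|\xi|^{-1}e^{-|\xi|^2t/2}$, which cancels the $|\xi|$ gained from $\diver G_0$. So $L^1$ data give only $\|u\|_{L^2}\lesssim(1+t)^{-3/4}$, as in \eqref{3.31}--\eqref{3.32}. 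The resulting $\|u\|_{L^2}^2\lesssim(1+t)^{-3/2}$ is exactly the threshold behind $\sigma<3/2$, not a rate with ample room.

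Worse, Cauchy--Schwarz against your weight-$\sigma$ ansatz leads to $\int_0^t(1+t-s)^{-3/2}(1+s)^{-\sigma}ds$. This is of exact order $(1+t)^{-\sigma}$ (compare Lemma \ref{le2.4}), and a faster kernel would not change that. The Duhamel term therefore yields only $\|u\|_{L^2}^2\lesssim(1+t)^{-\sigma}$, and then $(1+t)^{\sigma-1}\|u\|_{L^2}^2\sim(1+t)^{-1}$ is not integrable. Closing the bootstrap at a single weight $\sigma$ is circular.

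The idea you are missing is the paper's assistant energy at the critical weight, $\mathcal{E}_a(t)=\sup_{t'\le t}(1+t')^{3/2}\|(u,\mathcal{G})(t')\|_{H^2}^2$. Combining the $(1+t)^{3/2}$-weighted energy inequality with the $(1+t)^{-3/4}$ Duhamel bound gives $\mathcal{E}_a(t)\le C_1(M_0^2+\|(\rho_0,u_0,\mathbb{F}_0-\mathbb{I})\|_{H^2}^2)+C_0\delta\int_0^t(1+\tau)^{-1}\mathcal{E}_a(\tau)d\tau$. Gronwall then gives $\mathcal{E}_a\lesssim(1+t)^{C_0\delta}$. This $\delta$-small loss of decay is absorbed by taking $\sigma<\sigma_0<3/2$ with $C_0\delta<\sigma_0-\sigma$. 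That, not Cauchy--Schwarz, is the origin of the restriction on $\sigma$.

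\textbf{Steps 1 and 2 also fail as written.}

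In Step 2, pairing the $G$-equation with $c^2G$ and multiplying by $(1+t)^\sigma$ puts $(1+t)^\sigma\|G\|_{H^2}^2$ on the left and $\sigma(1+t)^{\sigma-1}\|G\|_{H^2}^2$ on the right, yet $G$ does not decay. Weighting only $u$ instead leaves the uncancelled term $(1+t)^\sigma\langle\diver G,u\rangle_{H^2}$. You must pair with the decaying effective flux $\mathcal{G}=\Lambda^{-1}\diver G$, the very quantity you single out. Its equation $\mathcal{G}_t=-\Lambda u+\dots$ preserves the linear cancellation. You also need the cross term $\langle\Lambda^k u,-\Lambda^{k+1}\mathcal{G}\rangle$ to gain dissipation of $\Lambda\mathcal{G}$, which terms like $g(\rho)\diver G$ require (Lemma \ref{le3.1}).

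In Step 1, Cauchy--Schwarz against the weighted dissipation gives only $\int_0^\infty\|\nabla u\|_{H^2}dt\lesssim\delta+M_0$. This is not small, since $M_0$ is not assumed small. Because $G$ and $\mathbb{F}-\mathbb{I}$ are forced linearly by $\nabla u$, the smallness of $\|(\rho,u,G)\|_{H^2}$ that every nonlinear estimate relies on is lost. Your ansatz lumps these norms together with $O(M_0^2)$ quantities, so it never enforces that smallness. The paper instead interpolates by H\"older between the small unweighted bound $\int_0^t\|\nabla u\|_{H^2}^2d\tau\lesssim\delta^2$ of Lemma \ref{le3.1} and the weighted bound. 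This gives $\int_0^t\|\nabla u\|_{H^2}d\tau\lesssim\delta+\delta^{\frac{\sigma-1}{2\sigma}}M_0^{\frac{\sigma+1}{2\sigma}}$, which is small once $\delta$ is small relative to $M_0$.

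Finally, $\mathbb{F}-\mathbb{I}$ cannot be recovered from $(\rho,G)$, because $\mathbb{F}\mathbb{F}^{\top}$ determines $\mathbb{F}$ only up to a rotation. For example, with $\tilde\rho\equiv1$ and $u\equiv0$, any localized $\mathbb{F}=R(x)\in SO(3)$ is a steady solution with $G\equiv0$. So $\mathbb{F}-\mathbb{I}$ must be estimated from its own transport equation, as in \eqref{3.24}--\eqref{3.25}.
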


We give some remarks in the following.

\begin{Remark}
In this paper, we relax the previous smallness assumption conditions on initial data in \cite{Ai-Wang2025} by replacing $H^{-1}$ conditions with certain $L^{1}$ conditions. Moreover, the constant $M_{0}$ in Theorem 1.1 is not required to be small.
\end{Remark}

\begin{Remark}
As pointed out in \cite{Huang-Liu-Zhu2026}, the precise selection of $\sigma$ is crucial, which is used to capture the time-decay of $u$ and obtain the uniform bounds of $\rho$ and $\mathbb{F}-\mathbb{I}$. In our results, we obtained that $\sigma\in(1,\sigma_{0})$ and $1<\sigma_{0}<\frac{3}{2}$, and its range of values is optimal.
\end{Remark}

The rest of this paper is structured as follows. In Section \ref{se2}, we transform system (\ref{1.1}) into a suitable dissipative system for the velocity $u$ and the effective flux $\mathcal{G}$ and present several useful lemmas which are frequently used in the following sections. In Section \ref{se3}, with the help of the previous results, we establish some key priori estimates by utilizing standard energy method and spectral method, and then finish the proof of Theorem 1.1.

\section{Derivation of the dissipative system}\label{se2}
In this section, we first make appropriate transformations to system (\ref{1.1}). Then we give some notations and useful results that play important roles in the subsequent sections.
\subsection{Reformulation}
\ \ \ \\

Since the specific values of the positive coefficients $\mu>0, c>0$ in (\ref{1.1}) are not essential in the present article, so we take $\mu=c=1$ and define $\rho:=\tilde{\rho}-1$ in the rest of this paper. Next, in order to effectively analyze (\ref{1.1}), we provide the definition of the effective tensor $G$ as follows
\begin{equation}\label{2.1}
G:=\tilde{\rho}\mathbb{F}\mathbb{F}^{\top}-\mathbb{I}.
\end{equation}
And we have
\begin{equation}\label{2.2}
(\tilde{\rho}\mathbb{F}\mathbb{F}^{\top})_{t}+(u\cdot\nabla\tilde{\rho})\mathbb{F}\mathbb{F}^{\top}+\tilde{\rho}(u\cdot\nabla\mathbb{F})\mathbb{F}^{\top}
+\tilde{\rho}\mathbb{F}(u\cdot\nabla\mathbb{F}^{\top})=\tilde{\rho}(\nabla u \mathbb{F})\mathbb{F}^{\top}+\tilde{\rho}\mathbb{F}\mathbb{F}^{\top}(\nabla u)^{\top}.
\end{equation}
From (\ref{2.1}) and (\ref{2.2}), we can derive the evolution for effective tensor $G$
\begin{equation}\label{2.3}
G_{t}+u\cdot \nabla G+Q(\nabla u, G)=2D(u),
\end{equation}
where $Q(\nabla u, G):=-\nabla uG-G(\nabla u)^{\top}$ and $D(u):=\frac{1}{2}(\nabla u+(\nabla u)^{\top})$.

Combining (\ref{1.1}) with (\ref{2.3}), the following new system is established
\begin{equation}\label{2.4}
\begin{cases}
u_{t}-\Delta u+u\cdot\nabla u+(1-\frac{1}{\rho+1})\Delta u+\frac{1}{\rho+1}\nabla p+(1-\frac{1}{\rho+1})\diver G=\diver G,\\
G_{t}+ u\cdot\nabla G+Q(\nabla u, G)=2D(u), \\
\diver u=0, \qquad\qquad (x,t)\in\mathbb{R}^{3}\times\mathbb{R}^{+}.
\end{cases}
\end{equation}
Moreover, we introduce the effective flux $\mathcal{G}:=\Lambda^{-1}\diver G$, which seems more compatible with $u$. Then the following evolution of $(u,\mathcal{G})$ can be derived from (\ref{2.4})
\begin{equation}\label{2.5}
\begin{cases}
u_{t}-\Delta u+u\cdot\nabla u+(1-\frac{1}{\rho+1})\Delta u+\frac{1}{\rho+1}\nabla p+(1-\frac{1}{\rho+1})\Lambda\mathcal{G}=\Lambda\mathcal{G},\\
\mathcal{G}_{t}+\Lambda^{-1}\diver (u\cdot\nabla G)+\Lambda^{-1}\diver Q(\nabla u, G)=-\Lambda u, \\
\diver u=0, \qquad\qquad (x,t)\in\mathbb{R}^{3}\times\mathbb{R}^{+}.
\end{cases}
\end{equation}
\subsection{Some useful lemmas}
\ \ \ \\

In this subsection, we present some useful lemmas which are frequently used in later sections. First,
we recall the general Gagliardo-Nirenberg inequality.

\begin{Lemma}\label{le2.1}
Let $1\leq q, r\leq \infty$ and $0\leq m\leq\alpha< l$, then we have
\begin{equation}\label{2.6}
\|\nabla^{\alpha}f\|_{L^p}\lesssim \|\nabla^{m}f\|_{L^q}^{1-\theta}\|\nabla^{l}f\|_{L^r}^{\theta},
\end{equation}
where $\frac{\alpha}{l}\leq\theta\leq1$ and $\alpha$ satisfies
\begin{equation*}
  \frac{\alpha}{3}-\frac1p=\left(\frac{m}{3}-\frac{1}{q}\right)(1-\theta)+\left(\frac{l}{3}-\frac{1}{r}\right)\theta
\end{equation*}
with the following exceptional cases: \\
$(1)$ If $\alpha=0, rl<3, q=\infty$, then one needs the additional assumption that either $f$ tends to zero at infinity or $f\in L^{\tilde{q}}$ for some finite $\tilde{q}>0$;\\
$(2)$ If $1<r<\infty$, and $l-\alpha-\frac{3}{r}$ is a non negative integer, then the interpolation inequalities in (\ref{2.6}) hold only for $\theta$ satisfying $\frac{\alpha}{l}\leq \theta<1$;\\
$(3)$ The interpolation inequalities in (\ref{2.6}) also hold for fractional derivatives.
\end{Lemma}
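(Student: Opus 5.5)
We prove Lemma \ref{le2.1} by the standard Littlewood--Paley argument, with Nirenberg's classical proof as a fallback in the ranges where Bernstein's inequality is unavailable. First, by the scaling $f(x)\mapsto f(\lambda x)$, the exponent relation in the statement is exactly the one making both sides of \eqref{2.6} homogeneous of the same degree in $\lambda$. It therefore suffices to prove the additive inequality
\begin{equation*}
\|\nabla^{\alpha}f\|_{L^p}\lesssim \|\nabla^{m}f\|_{L^q}+\|\nabla^{l}f\|_{L^r}
\end{equation*}
and then rescale by $f(\lambda\,\cdot)$, optimizing in $\lambda$, to obtain the multiplicative form with the prescribed $\theta$.

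To prove the additive inequality, decompose $f=\sum_{j\in\mathbb{Z}}\dot{\Delta}_j f$ with homogeneous dyadic blocks, and split the sum at a frequency $2^{N}$. For the low frequencies $j\le N$, Bernstein's inequality gives
\begin{equation*}
\|\nabla^{\alpha}\dot{\Delta}_j f\|_{L^p}\lesssim 2^{j(\alpha-m+\frac3q-\frac3p)}\|\nabla^{m}f\|_{L^q}
\end{equation*}
when $p\ge q$. For the high frequencies $j>N$,
\begin{equation*}
\|\nabla^{\alpha}\dot{\Delta}_j f\|_{L^p}\lesssim 2^{j(\alpha-l+\frac3r-\frac3p)}\|\nabla^{l}f\|_{L^r}
\end{equation*}
when $p\ge r$. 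The scaling relation with $\frac{\alpha}{l}\le\theta<1$ forces the first exponent to be positive and the second negative, so both series are geometric. Summing and choosing $2^{N}\sim(\|\nabla^{l}f\|_{L^r}/\|\nabla^{m}f\|_{L^q})^{1/(l-m+3/q-3/r)}$ yields \eqref{2.6} directly. Fractional $\alpha,m,l$ cost nothing here, since Bernstein's inequality holds for $\Lambda^s$; this gives exceptional case (3).

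The exceptional cases arise as follows. Case (1) is needed because when $\alpha=0$ and $q=\infty$ the homogeneous decomposition only recovers $f$ modulo polynomials/constants. Assuming $f\to0$ at infinity, or $f\in L^{\tilde q}$, guarantees that $\sum_j\dot{\Delta}_j f$ converges to $f$ itself. Case (2) corresponds to the endpoint $\theta=1$ with $l-\alpha-\frac3r$ a nonnegative integer. There the high-frequency exponent vanishes, the series diverges logarithmically, and the estimate genuinely fails (e.g.\ $\dot W^{3/r,r}\not\hookrightarrow L^\infty$), so we restrict to $\theta<1$.

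I expect the main obstacle to be the ranges $p<q$ or $p<r$, where Bernstein's inequality cannot lower integrability. There the dyadic argument must be replaced by Nirenberg's original proof: first the one-dimensional inequality via integration by parts and H\"older, $\int|f'|^{p}\lesssim\big(\int|f|^{q}\big)^{\cdot}\big(\int|f''|^{r}\big)^{\cdot}$ for appropriate exponents, then iteration in each coordinate direction and induction on $l-m$ combined with the Sobolev embedding. Since Lemma \ref{le2.1} is classical, for these ranges I would ultimately invoke Nirenberg's theorem rather than reprove it. In this paper it is only applied with $p\ge q,r$, where the Littlewood--Paley proof above is complete.
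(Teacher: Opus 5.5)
Your Littlewood--Paley argument is correct where it applies, but it does not cover the range you claim, and what it misses is an endpoint the lemma explicitly asserts. Write $D:=(l-\frac3r)-(m-\frac3q)$. The scaling relation turns your two dyadic exponents into
\[
\alpha-m+\tfrac3q-\tfrac3p=\theta D,\qquad \alpha-l+\tfrac3r-\tfrac3p=-(1-\theta)D .
\]

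\textbf{The endpoint $\theta=1$.} The high-frequency exponent vanishes for \emph{every} choice of indices at $\theta=1$, not only when $l-\alpha-\frac3r$ is a nonnegative integer. So $\sum_{j>N}\|\nabla^l f\|_{L^r}$ diverges. Yet the lemma asserts $\theta=1$ outside case (2), for instance $\|f\|_{L^6}\lesssim\|\nabla f\|_{L^2}$ or $\|f\|_{L^{3/2}}\lesssim\|\nabla f\|_{L^1}$. Both lie in your range $p\ge q,r$. By restricting to $\theta<1$ you have simply not proved them.

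\textbf{A degenerate interior case.} The same failure occurs for $\theta\in(0,1)$ when $\alpha=m$ and $p=q$. Then $\theta D=0$ forces $D=0$, both exponents vanish, and your claim that the scaling relation forces strict signs is false. An example is $\|f\|_{L^6}\lesssim\|f\|_{L^6}^{1-\theta}\|\nabla f\|_{L^2}^{\theta}$.

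\textbf{What is missing.} In all these scale-critical cases the inequality is a genuine Sobolev embedding $\dot W^{l,r}\hookrightarrow\dot W^{\alpha,p}$. Bernstein only yields $\sup_j\|\dot\Delta_j\nabla^\alpha f\|_{L^p}\lesssim\|\nabla^l f\|_{L^r}$, a $\dot B^{0}_{p,\infty}$ bound, and summing in $j$ needs an extra ingredient. For $1<r<p<\infty$ that ingredient is Hardy--Littlewood--Sobolev, the tool behind Lemma~\ref{le2.5}. For $r=1$ it is the Gagliardo--Nirenberg--Sobolev $L^1$ argument.

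\textbf{Case (2).} Your account misidentifies the mechanism. At $\theta=1$ the scaling relation gives $l-\alpha-\frac3r=-\frac3p\le0$, so the integrality condition means exactly $p=\infty$ and $l-\alpha=\frac3r$. The exclusion reflects the failure of $\dot W^{3/r,r}\hookrightarrow L^\infty$. It is not caused by divergence of the dyadic series, which happens at every $\theta=1$, including the many cases where the inequality is true.

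\textbf{Reconstruction of $f$.} Summing $\dot\Delta_j\nabla^\alpha f$ recovers $\nabla^\alpha f$ only modulo polynomials. You therefore need an $\mathcal{S}'_h$-type hypothesis in general, not only in case (1). Take $q=p=\infty$, $m=\alpha=1$, $l=2$, $r=3$, $\theta\in[\frac12,1)$. Then $f=x_1$ satisfies the scaling relation, yet $\|\nabla f\|_{L^\infty}=1$ while $\|\nabla^2 f\|_{L^3}=0$. This is again a degenerate $D=0$ case inside your claimed range. It is partly a defect of the statement itself.

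\textbf{Comparison with the paper.} The paper's proof is only a citation of Nirenberg's theorem, which has $m=0$ on the lower norm. The honest fix is to cite Nirenberg, or Sobolev/HLS, for the endpoint and degenerate cases too, as you already do for $p<\max(q,r)$. Your dyadic argument then cleanly proves the case $0<\theta<1$, $p\ge\max(q,r)$, $(\alpha,p)\neq(m,q)$, including fractional orders.
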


\begin{proof}
See Theorem (pp. 125--126) in \cite{Nirenberg1959}.
\end{proof}

\begin{Lemma}\label{le2.2}
If any smooth function $g(\cdot)$ is defined around $0$ with $g(0)=0$, which satisfies
\begin{equation*}
\text{$g(\rho)\sim\rho$\ \ \  and  \ \ \  $\|g^{(k)}(\rho)\|_{L^{2}}\leq C(k)$\ \ \ \ \ \  for any \ \  $0\leq k \leq2$ },
\end{equation*}
then it holds that
\begin{align*}
&\|g(\rho)\|_{L^{p}}\lesssim\|\rho\|_{L^{p}}, \ \ \ \text{for some~$p$ with $1\leq p\leq\infty$} , \\
&\|\nabla^{k}g(\rho)\|_{L^{p}}\lesssim\|\nabla^{k}\rho\|_{L^{p}}, \ \ \ k=1,2.
\end{align*}
\end{Lemma}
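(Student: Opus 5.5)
The plan is to prove the $L^{p}$ bound first, then the derivative bounds, using only that $g$ is smooth near $0$, $g(0)=0$, and that $\rho$ stays in a small neighbourhood of $0$. In this paper $\rho$ is small in $H^{2}\hookrightarrow L^{\infty}$, so we may assume $\|\rho\|_{L^\infty}\le \tfrac12$. The hypothesis ``$\|g^{(k)}(\rho)\|\le C(k)$'' will be read as a uniform pointwise bound $\sup_{|s|\le 1/2}|g^{(k)}(s)|\le C(k)$ for $0\le k\le 2$; this is what smoothness of $g$ on a compact neighbourhood of $0$ gives.

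For the zeroth-order estimate, I apply the mean value theorem with $g(0)=0$:
\begin{equation*}
g(\rho(x))=\rho(x)\int_0^1 g'(\theta\rho(x))\,d\theta .
\end{equation*}
This gives the pointwise bound $|g(\rho)|\le C(1)|\rho|$. Taking the $L^{p}$ norm gives $\|g(\rho)\|_{L^{p}}\lesssim\|\rho\|_{L^{p}}$ for every $1\le p\le\infty$.

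For $k=1$, the chain rule gives $\nabla g(\rho)=g'(\rho)\nabla\rho$. Hence $|\nabla g(\rho)|\le C(1)|\nabla\rho|$ pointwise, and the $L^{p}$ bound follows.

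For $k=2$, I write
\begin{equation*}
\nabla^2 g(\rho)=g'(\rho)\nabla^2\rho+g''(\rho)\nabla\rho\otimes\nabla\rho .
\end{equation*}
The first term is bounded by $C(1)\|\nabla^2\rho\|_{L^{p}}$ directly. The quadratic term $g''(\rho)|\nabla\rho|^{2}$ is the main obstacle: its $L^{p}$ norm is $\le C(2)\|\nabla\rho\|_{L^{2p}}^{2}$, which must be controlled by $\|\nabla^{2}\rho\|_{L^{p}}$ alone. I would use the Gagliardo--Nirenberg inequality, Lemma \ref{le2.1}, with $m=0$, $q=\infty$, $\alpha=1$, $l=2$, $r=p$ and $\theta=\tfrac12$:
\begin{equation*}
\|\nabla\rho\|_{L^{2p}}\lesssim\|\rho\|_{L^\infty}^{1/2}\|\nabla^2\rho\|_{L^{p}}^{1/2}.
\end{equation*}
The scaling condition is satisfied, since $\frac13-\frac1{2p}=\frac12\left(\frac23-\frac1p\right)$. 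This gives $\|\nabla\rho\|_{L^{2p}}^{2}\lesssim\|\rho\|_{L^\infty}\|\nabla^2\rho\|_{L^{p}}\lesssim\|\nabla^2\rho\|_{L^{p}}$, using the $L^\infty$ smallness (or boundedness) of $\rho$. Since $\alpha=1\neq 0$, exceptional case (1) does not arise.

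Some care is needed at the endpoints. For $p=\infty$ the interpolation is the classical $\|\nabla f\|_{L^\infty}^2\lesssim\|f\|_{L^\infty}\|\nabla^2 f\|_{L^\infty}$. For $1<p<\infty$, case (2) of Lemma \ref{le2.1} requires $\theta<1$ when $l-\alpha-3/r=1-3/p$ is a nonnegative integer; $\theta=\tfrac12$ satisfies this. In the applications $p=2$, which causes no trouble. Alternatively, for $p=2$ one can bound $\|\nabla\rho\|_{L^4}^2\lesssim\|\nabla\rho\|_{L^2}^{1/2}\|\nabla^2\rho\|_{L^2}^{3/2}$ and use the smallness of $\|\nabla\rho\|_{L^2}$. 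Combining the two terms gives $\|\nabla^{2}g(\rho)\|_{L^{p}}\lesssim\|\nabla^{2}\rho\|_{L^{p}}$, which completes the plan.
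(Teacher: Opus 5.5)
Your argument is correct. The paper gives no proof of its own; it only cites Proposition~2.2 of Zhu's 2022 paper, and your route (mean value theorem for $k=0$, the chain rule for $k=1$, and for $k=2$ the interpolation $\|\nabla\rho\|_{L^{2p}}^{2}\lesssim\|\rho\|_{L^\infty}\|\nabla^{2}\rho\|_{L^{p}}$ applied to the quadratic term $g''(\rho)\nabla\rho\otimes\nabla\rho$) is the standard proof of such composition estimates. Your reading of the hypothesis as a uniform bound on $g^{(k)}$ over the small range of $\rho$ is the right one; taken literally, the stated $L^{2}$ bound is not what the argument needs.

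One loose end, in your optional $p=2$ variant only: the bound $\|\nabla\rho\|_{L^4}^2\lesssim\|\nabla\rho\|_{L^2}^{1/2}\|\nabla^2\rho\|_{L^2}^{3/2}$ gives $\lesssim\|\nabla^2\rho\|_{L^2}$ only if $\|\nabla^2\rho\|_{L^2}$ is also bounded. Smallness of $\|\nabla\rho\|_{L^2}$ alone does not suffice. This is harmless here, since $\|\rho\|_{H^2}\le\delta$.
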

\begin{proof}
See Proposition 2.2 in \cite{Zhu2022}.
\end{proof}

\begin{Lemma}\label{le2.3}
Assume that $\alpha\geq0, \beta>0$ and $c>0$ are real numbers. Let $1\leq p\leq q\leq\infty$. Then there exists a constant $C>0$ such that
\begin{equation}\label{2.7}
\|\Lambda^{\alpha}e^{-c\Lambda^{\beta}t}f\|_{L^{q}(\mathbb{R}^{3})}\leq C t^{-\frac{\alpha}{\beta}-\frac{3}{\beta}\Big(\frac{1}{p}-\frac{1}{q}\Big)}\|f\|_{L^{p}(\mathbb{R}^{3})}
\end{equation}
for any $t>0$.
\end{Lemma}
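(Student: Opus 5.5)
The estimate \eqref{2.7} is a Fourier-multiplier estimate for the semigroup $e^{-c\Lambda^{\beta}t}$, and I would prove it by reduction to a kernel bound. First I remove $c$ and $t$ by scaling. Let $K_t:=\mathscr{F}^{-1}\big(|\xi|^{\alpha}e^{-c|\xi|^{\beta}t}\big)$, so that $\Lambda^{\alpha}e^{-c\Lambda^{\beta}t}f=K_t*f$. Substituting $\xi=t^{-1/\beta}\eta$ gives
\begin{equation*}
K_t(x)=t^{-\frac{\alpha}{\beta}-\frac{3}{\beta}}K_1\big(t^{-1/\beta}x\big),
\end{equation*}
where $K_1$ is the kernel with $t=1$ (and $c$ absorbed into the constant). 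Consequently
\begin{equation*}
\|K_t\|_{L^{r}}=t^{-\frac{\alpha}{\beta}-\frac{3}{\beta}\left(1-\frac1r\right)}\|K_1\|_{L^{r}}.
\end{equation*}
Young's convolution inequality $\|K_t*f\|_{L^q}\le\|K_t\|_{L^r}\|f\|_{L^p}$ with $1+\frac1q=\frac1r+\frac1p$ gives $1-\frac1r=\frac1p-\frac1q$, which is exactly the exponent in \eqref{2.7}. Since $1\le p\le q\le\infty$ forces $r\in[1,\infty]$, it suffices to show $K_1\in L^1\cap L^\infty$. Then $K_1\in L^r$ for every $r$ by interpolation.

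For $K_1\in L^\infty$: the symbol $m(\xi)=|\xi|^{\alpha}e^{-c|\xi|^{\beta}}$ is in $L^1(\mathbb{R}^3)$ because it decays faster than any power, so $\|K_1\|_{L^\infty}\le\|m\|_{L^1}<\infty$.

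The main obstacle is $K_1\in L^1$, because $m$ is not smooth at $\xi=0$ when $\alpha$ or $\beta$ is not an even integer. I would prove the decay bound $|K_1(x)|\lesssim|x|^{-3-\gamma}$ for $|x|\ge1$ with $\gamma=\min(\alpha,\beta)>0$ (or $\gamma=\beta$ when $\alpha=0$). To do so, write $m=\sum_{j\in\mathbb{Z}}\varphi_j m$ with a Littlewood--Paley partition. For $j\ge0$ the pieces are Schwartz with rapidly decaying norms. For $j<0$, integrating by parts $N=4$ times gives
\begin{equation*}
|\mathscr{F}^{-1}(\varphi_j m)(x)|\lesssim 2^{j(3+\gamma)}\min\big(1,(2^j|x|)^{-4}\big).
\end{equation*}
This uses the symbol estimates $|\partial^{k}m(\xi)|\lesssim|\xi|^{\gamma-k}$ for $|\xi|\le1$, which hold because $m(\xi)-|\xi|^{\alpha}$ behaves like $|\xi|^{\alpha+\beta}$; in the case $\alpha=0$ one handles $e^{-c|\xi|^{\beta}}-1$, whose inverse Fourier transform differs from $K_1$ only by a Dirac mass, and that mass has no effect on the decay for $|x|\ge 1$. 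Summing over $j$ yields $|K_1(x)|\lesssim|x|^{-3-\gamma}$, which is integrable at infinity. Combined with boundedness near the origin, this gives $K_1\in L^1$.

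Two degenerate cases need a separate remark. When $\alpha=0$, $K_t$ is a probability-type kernel for $\beta\le2$; for general $\beta$ the decay argument above still applies, and the Dirac mass from the subtraction is harmless. When $p=q$, the bound on $\|K_t\|_{L^1}$ alone suffices. This completes the plan, and the constant $C$ depends only on $\alpha,\beta,c$.
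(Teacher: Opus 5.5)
Your proof is correct. The paper gives no argument of its own for this lemma; it only refers to Lemma 2.3 in the work of Dong, Li and Wu. Your proposal therefore replaces a citation with a self-contained proof along the standard route. That route has three steps: the scaling identity $K_t(x)=t^{-\frac{\alpha}{\beta}-\frac{3}{\beta}}K_1(t^{-1/\beta}x)$; Young's inequality with $1+\frac1q=\frac1r+\frac1p$, which is exactly where $p\le q$ enters to give $r\in[1,\infty]$; and the fact that $K_1\in L^1\cap L^\infty$. What your version adds over the citation is transparency. It shows that the constant depends only on $\alpha,\beta,c$, uniformly in $p,q$, since $\|K_1\|_{L^r}\le\max(\|K_1\|_{L^1},\|K_1\|_{L^\infty})$. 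It also isolates the one nontrivial point, namely integrability of $K_1$ at infinity when the symbol is not smooth at $\xi=0$.

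Two small pieces of bookkeeping in that step should be tidied.

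\emph{The case $\alpha>0$.} Leibniz's rule gives directly $|\partial^k m(\xi)|\lesssim|\xi|^{\alpha-k}$ for $|\xi|\le1$, so $\gamma=\alpha$ already works. With $N=4$ fixed, summing $2^{j(3+\gamma)}\min(1,(2^j|x|)^{-4})$ over $j<0$ yields $|x|^{-3-\gamma}$ only if $\gamma<1$. Otherwise you get $|x|^{-4}$, with a logarithm when $\gamma=1$. Either take $N>3+\gamma$, or simply note that $|x|^{-4}$ is already integrable at infinity.

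\emph{The case $\alpha=0$.} Subtract $1$ only on the low-frequency piece. With a smooth cutoff $\psi$ equal to $1$ near the origin, write $K_1=\mathscr{F}^{-1}\psi+\mathscr{F}^{-1}\big(\psi(m-1)\big)+\mathscr{F}^{-1}\big((1-\psi)m\big)$. The first and last terms are Schwartz, and the middle one obeys your low-frequency bound with $\gamma=\beta$. This avoids decomposing $m-1$ at high frequencies, where its pieces are not rapidly decaying, and no Dirac mass ever appears.
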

\begin{proof}
See Lemma 2.3 in \cite{Dong-Li-Wu2019}.
\end{proof}

\begin{Lemma}\label{le2.4}
If $0<s_{1}\leq s_{2}$, then there exists a constant $C>0$ such that
\begin{equation}\label{2.8}
\int_{0}^{t}(1+t-\tau)^{-s_{1}}(1+\tau)^{-s_{2}}d\tau\leq
\begin{cases}
C(1+t)^{-s_{1}}, & \text{if $s_{2}>1$},\\
C(1+t)^{-s_{1}}\ln(1+t), & \text{if $s_{2}=1$},\\
C(1+t)^{1-s_{1}-s_{2}}, & \text{if $s_{2}<1$}.
\end{cases}
\end{equation}
\end{Lemma}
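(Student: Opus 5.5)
The plan is to split the integral at $\tau=t/2$ and bound each factor by its supremum on the half where it is not singular. On $[0,t/2]$ we have $1+t-\tau\geq \frac12(1+t)$, so $(1+t-\tau)^{-s_1}\leq 2^{s_1}(1+t)^{-s_1}$. Hence the first half is at most
\[
C(1+t)^{-s_1}\int_0^{t/2}(1+\tau)^{-s_2}\,d\tau .
\]
The remaining integral is bounded by a constant if $s_2>1$, by $\ln(1+t)$ if $s_2=1$, and by $C(1+t)^{1-s_2}$ if $s_2<1$. These give exactly the three right-hand sides in (2.8).

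On $[t/2,t]$ we have $1+\tau\geq\frac12(1+t)$, so $(1+\tau)^{-s_2}\leq 2^{s_2}(1+t)^{-s_2}$. The second half is then at most
\[
C(1+t)^{-s_2}\int_0^{t/2}(1+r)^{-s_1}\,dr ,
\]
after the substitution $r=t-\tau$. Since $s_1\leq s_2$, we compare this term with the first half case by case.

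\emph{Case $s_1>1$.} The integral is bounded, so this half is at most $C(1+t)^{-s_2}\leq C(1+t)^{-s_1}$.

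\emph{Case $s_1=1$.} The integral is at most $\ln(1+t)$, giving $C(1+t)^{-s_2}\ln(1+t)$. If $s_2>1$, this is $\leq C(1+t)^{-1}=C(1+t)^{-s_1}$, because $(1+t)^{1-s_2}\ln(1+t)$ is bounded. If $s_2=1$, it is already of the form $C(1+t)^{-s_1}\ln(1+t)$.

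\emph{Case $s_1<1$.} The integral is at most $C(1+t)^{1-s_1}$, giving $C(1+t)^{1-s_1-s_2}$. If $s_2<1$, this is exactly the third bound. If $s_2>1$, then $1-s_2<0$, so this is $\leq C(1+t)^{-s_1}$. If $s_2=1$, it equals $C(1+t)^{-s_1}$, which is bounded by $C(1+t)^{-s_1}\ln(1+t)$ for $t\geq 1$.

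There is one technical point in the $\ln$ case. For small $t$ the factor $\ln(1+t)$ vanishes, so the $\ln$ bound must be read as holding for $t\geq1$, or with $\ln(2+t)$ in its place. For $t\leq1$ the whole integral is trivially at most $t\leq C$, which is comparable to $(1+t)^{-s_1}$ there.

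I expect no real obstacle. The only care needed is the bookkeeping when $s_1\leq 1$ and $s_2$ falls in a different regime, which is the case analysis above.
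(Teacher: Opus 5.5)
Your split at $\tau=t/2$ and the case bookkeeping are correct, and this is the standard proof of the estimate. The paper gives no argument of its own; it only cites Lemma 2.4 of Wan (2019).

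One thing to fix is your caveat in the logarithmic case. No reinterpretation (restricting to $t\ge1$, or replacing $\ln(1+t)$ by $\ln(2+t)$) is needed, because the bound holds as stated for all $t>0$. The integrand is at most $1$, so for $t\le 1$ the integral is at most $t$. By concavity of $\ln(1+t)$ on $[0,1]$, and since $(1+t)^{-s_1}\ge 2^{-s_1}$ there,
$$t\le \frac{\ln(1+t)}{\ln 2}\le \frac{2^{s_1}}{\ln 2}(1+t)^{-s_1}\ln(1+t).$$
Your comparison with $(1+t)^{-s_1}$ for small $t$ targets the wrong quantity; what is needed is comparison with $(1+t)^{-s_1}\ln(1+t)$, and that holds because both sides vanish linearly at $t=0$.
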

\begin{proof}
See Lemma 2.4 in \cite{Wan2019}.
\end{proof}

If $s\in [0,\frac{3}{2})$, we can infer from the Hardy-Littlewood-Sobolev theorem that the following $L^p$-type inequality holds:
\begin{Lemma}\label{le2.5}
Let $0\leq s<\frac{3}{2}, 1<p\leq2$ with $\frac{1}{2}+\frac{s}{3}=\frac{1}{p}$, it holds that
\begin{equation*}
\|f\|_{\dot{H}^{-s}}\lesssim\|f\|_{L^{p}}.
\end{equation*}
\end{Lemma}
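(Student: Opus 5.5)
The plan is to reduce the claimed estimate to the Hardy--Littlewood--Sobolev inequality by duality on the Fourier side. By Plancherel, $\|f\|_{\dot{H}^{-s}}=\|\Lambda^{-s}f\|_{L^{2}}$. The case $s=0$ is trivial, since then $p=2$ and the inequality is just $\|f\|_{L^2}\le\|f\|_{L^2}$. So we may assume $0<s<\frac{3}{2}$, which forces $1<p<2$.

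For $0<s<3$, the operator $\Lambda^{-s}$ is (up to a constant $c_s$) convolution with the Riesz potential kernel:
\begin{equation*}
\Lambda^{-s}f=c_{s}\,|x|^{-(3-s)}*f .
\end{equation*}
This identity follows from the classical formula for the Fourier transform of $|x|^{-(3-s)}$, namely that it is a constant multiple of $|\xi|^{-s}$, understood in the sense of tempered distributions. We apply it first for $f$ in the Schwartz class.

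The Hardy--Littlewood--Sobolev theorem on $\mathbb{R}^{3}$ states that $I_{s}f:=|x|^{-(3-s)}*f$ satisfies
\begin{equation*}
\|I_{s}f\|_{L^{q}}\lesssim\|f\|_{L^{p}}\quad\text{whenever } 1<p<q<\infty,\ \ \frac{1}{q}=\frac{1}{p}-\frac{s}{3}.
\end{equation*}
Take $q=2$. The condition then reads $\frac{1}{p}=\frac{1}{2}+\frac{s}{3}$, which is exactly the hypothesis of the lemma. Moreover, $s<\frac{3}{2}$ guarantees $\frac{1}{p}<1$, so $p>1$ as HLS requires. Hence $\|f\|_{\dot{H}^{-s}}=\|\Lambda^{-s}f\|_{L^{2}}\lesssim\|f\|_{L^{p}}$ for Schwartz $f$.

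The general case $f\in L^{p}$ follows by density. Approximate $f$ in $L^{p}$ by Schwartz functions $f_n$. The bound just proved shows that $\Lambda^{-s}f_n$ is Cauchy in $L^{2}$, and its limit agrees with $\Lambda^{-s}f$ in the sense of distributions. The inequality therefore passes to the limit.

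The only delicate point is the endpoint behaviour. At $s=\frac{3}{2}$ we would need $p=1$, where HLS fails, and this is why the lemma requires $s<\frac{3}{2}$ strictly. The identification of $\Lambda^{-s}$ with the Riesz potential is standard, and I would cite it (e.g.\ Stein's \emph{Singular Integrals}) rather than reprove it.

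An alternative is a dual argument. For $g\in L^{2}$, Parseval gives
\begin{equation*}
\langle \Lambda^{-s}f,g\rangle=\langle f,\Lambda^{-s}g\rangle\le \|f\|_{L^{p}}\|\Lambda^{-s}g\|_{L^{p'}},
\end{equation*}
with $p'$ the conjugate exponent. Sobolev embedding $\dot{H}^{s}\hookrightarrow L^{p'}$, where $\frac{1}{p'}=\frac{1}{2}-\frac{s}{3}$, gives $\|\Lambda^{-s}g\|_{L^{p'}}\lesssim\|g\|_{L^2}$. Taking the supremum over $\|g\|_{L^2}\le1$ yields the same conclusion. Either route is acceptable; I would present the HLS version.
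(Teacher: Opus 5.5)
Your proof is correct and follows the same route as the paper, which simply cites the Hardy--Littlewood--Sobolev theorem (Stein, \emph{Singular Integrals}, Theorem 1, p.~119). Setting $q=2$ in $\frac{1}{q}=\frac{1}{p}-\frac{s}{3}$ is exactly what turns that theorem into $\|\Lambda^{-s}f\|_{L^2}\lesssim\|f\|_{L^p}$, and your treatment of $s=0$ and the density step just supply details the paper leaves to the reference.
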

\begin{proof}
See Theorem 1 (p. 119) in \cite{Stein1970}.
\end{proof}

Finally, we also provide the following useful regularity results for the Stokes problem:
\begin{Lemma}\label{le2.6}
Assume that $f\in L^{r}(\mathbb{R}^{n})$ with $2\leq r<\infty$. Let $(u,p)\in H^{1}(\mathbb{R}^{n})\times L^{2}(\mathbb{R}^{n})$ be the unique weak solution to the following Stokes problem
\begin{equation*}
\begin{cases}
-\mu\Delta u+\nabla p=f,\\
\diver u=0,\\
u(x)\rightarrow0, & x\rightarrow\infty.
\end{cases}
\end{equation*}
Then $(\nabla^{2}u,\nabla  p)\in L^{r}(\mathbb{R}^{n})$ and satisfies
\begin{equation*}
\|\nabla^{2}u\|_{L^{r}(\mathbb{R}^{n})}+\|\nabla p\|_{L^{r}(\mathbb{R}^{n})}
\lesssim \|f\|_{L^{r}(\mathbb{R}^{n})}.
\end{equation*}
\end{Lemma}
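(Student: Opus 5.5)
The plan is to prove Lemma \ref{le2.6} by working directly on the whole space, where the Stokes system decouples through the Fourier transform; throughout, $f$ is tacitly also in $L^{2}$, since this is what makes $(u,p)\in H^{1}\times L^{2}$ meaningful. The lemma then reduces to the $L^{r}$-boundedness of a few Calder\'on--Zygmund operators.

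\textbf{Step 1: eliminate the pressure.} Taking the divergence of the first equation and using $\diver u=0$ gives
\[
\Delta p=\diver f .
\]
In the weak formulation with $p\in L^{2}$, the solution is therefore
\[
p=(-\Delta)^{-1}(-\diver f),\qquad \nabla p=\nabla\Delta^{-1}\diver f=-\mathcal{R}\mathcal{R}\cdot f,
\]
where $\mathcal{R}=\nabla\Lambda^{-1}$ denotes the vector of Riesz transforms. In particular,
\[
\nabla_i p=-\mathcal{R}_i\mathcal{R}_j f^{j}.
\]

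\textbf{Step 2: reduce to a Poisson problem.} Substituting back, the velocity satisfies
\[
-\mu\Delta u=f-\nabla p=\mathbb{P}f,
\]
where $\mathbb{P}=\mathbb{I}+\mathcal{R}\otimes\mathcal{R}$ is the Leray projection. The decay condition $u\to0$ together with $u\in H^{1}$ fixes $u$ uniquely, because a harmonic $H^{1}$ function vanishes. Consequently
\[
\nabla_k\nabla_l u=-\mu^{-1}\mathcal{R}_k\mathcal{R}_l\,\mathbb{P}f,
\]
which on the Fourier side is just the multiplier identity $\xi_k\xi_l|\xi|^{-2}$. Uniqueness of the weak solution guarantees that these representation formulas coincide with the given $(u,p)$.

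\textbf{Step 3: conclude by singular-integral bounds.} Each Riesz transform is bounded on $L^{r}$ for $1<r<\infty$, so $\mathbb{P}$ is bounded on $L^{r}$ as well. Hence
\[
\|\nabla p\|_{L^r}\lesssim\|f\|_{L^r},\qquad \|\nabla^2u\|_{L^r}\lesssim\|\mathbb{P}f\|_{L^r}\lesssim\|f\|_{L^r},
\]
which is the claimed estimate. The only genuine analytic input is the Calder\'on--Zygmund theorem for the Riesz transforms, which can be cited from \cite{Stein1970}.

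\textbf{Main obstacle: justifying the formulas for merely $L^{r}$ data.} The formal identities are clear for $f$ in the Schwartz class. To pass to general $f$, I would approximate $f$ in $L^{2}\cap L^{r}$ by Schwartz functions $f_n$. The a priori bound then shows that $\nabla^{2}u_n$ and $\nabla p_n$ are Cauchy in $L^{r}$. On the other hand, $(u_n,p_n)\to(u,p)$ in $H^{1}\times L^{2}$ by the $L^{2}$ theory, so the two limits agree as distributions and the estimate passes to the limit.
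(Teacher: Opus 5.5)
The paper gives no argument of its own for this lemma; it only cites Galdi's monograph. So the question is whether your proof stands on its own, and most of it does. The analytic core is right: $\nabla p=-\mathcal{R}\mathcal{R}\cdot f$, $-\mu\Delta u=\mathbb{P}f$, and the $L^{r}$-boundedness of $\mathcal{R}_{i}\mathcal{R}_{j}$ for $1<r<\infty$ give the estimate. There is one harmless sign slip: with $\mathcal{R}=\nabla\Lambda^{-1}$ one has $\nabla_{k}\nabla_{l}=-\mathcal{R}_{k}\mathcal{R}_{l}\Delta$, so $\nabla_{k}\nabla_{l}u=+\mu^{-1}\mathcal{R}_{k}\mathcal{R}_{l}\mathbb{P}f$.

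\textbf{The gap.} It lies in the step you flag yourself: identifying the \emph{given} $(u,p)$ with these multiplier formulas. You offer two justifications, the appeal to uniqueness in Step 2 and the approximation paragraph. Both need the multiplier solution to lie in $H^{1}\times L^{2}$ and to depend continuously on $f\in L^{2}$, and neither holds on the whole space. On the Fourier side $\widehat{u}=(\mu|\xi|^{2})^{-1}\widehat{\mathbb{P}f}$ and $|\widehat{p}|\le|\xi|^{-1}|\widehat{f}|$. So the low frequencies of $u$, $\nabla u$, $p$ are governed by $\dot{H}^{-2}$ and $\dot{H}^{-1}$ norms of $f$, not by $\|f\|_{L^{2}}$, and there is no bound $\|u\|_{H^{1}}+\|p\|_{L^{2}}\lesssim\|f\|_{L^{2}}$. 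Concretely, in $\mathbb{R}^{3}$ a Schwartz $f_{n}$ with $\int f_{n}\,dx\neq0$ gives $|\widehat{u}_{n}|\sim|\xi|^{-2}$ near the origin in every direction not parallel to $\int f_{n}\,dx$, hence $u_{n}\notin L^{2}$. Your approximants are therefore not in the class where uniqueness is available. Nothing yields $(u_{n},p_{n})\to(u,p)$ in $H^{1}\times L^{2}$ from $f_{n}\to f$ in $L^{2}\cap L^{r}$. For the same reason, assuming $f\in L^{2}$ is not what places $(u,p)$ in $H^{1}\times L^{2}$. That membership is a hypothesis of the lemma, and it is exactly what you should use.

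\textbf{The repair.} Drop the approximation and run a Liouville argument on the given solution in $\mathcal{S}'$. Work with $\nabla p$ rather than $p$: for $f\in L^{r}$ the expression $\Delta^{-1}\diver f$ need not make sense, whereas $\mathcal{R}\mathcal{R}\cdot f\in L^{r}$ always does.
\begin{itemize}
\item Since $\Delta p=\diver f$ and $\Delta(\mathcal{R}_{i}\mathcal{R}_{j}f^{j})=-\nabla_{i}\nabla_{j}f^{j}$, the tempered distribution $\nabla p+\mathcal{R}\mathcal{R}\cdot f$ is harmonic. Its Fourier transform is supported at the origin, so it is a polynomial.
\item This polynomial lies in $H^{-1}+L^{r}$, because $p\in L^{2}$.
\item Convolve with $\phi\in C_{c}^{\infty}$, $\int\phi\,dx=1$. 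This keeps the top-degree part of a polynomial and maps $H^{-1}+L^{r}$ into $L^{2}+L^{r}$.
\item A nonzero polynomial cannot lie in $L^{2}+L^{r}$ when $r<\infty$. The integral of its absolute value over $B_{R}$ grows at least like $R^{n}$, while for elements of $L^{2}+L^{r}$ it is $O(R^{n/2}+R^{n(1-1/r)})$.
\end{itemize}
Hence $\nabla p=-\mathcal{R}\mathcal{R}\cdot f$, and then $-\mu\Delta u=\mathbb{P}f$. The same reasoning applied to $\nabla_{k}\nabla_{l}u-\mu^{-1}\mathcal{R}_{k}\mathcal{R}_{l}\mathbb{P}f\in H^{-1}+L^{r}$, using $u\in H^{1}$, identifies $\nabla^{2}u$. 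This argument uses only $f\in L^{r}$ and the stated regularity of $(u,p)$, so the extra assumption $f\in L^{2}$ becomes unnecessary. It also shows where $r<\infty$ enters, besides the Calder\'on--Zygmund bound.
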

\begin{proof}
See Lemma 4.3 (p. 322) in \cite{Galdi2011}.
\end{proof}

\section{Proof of Theorem 1.1 }\label{se3}
In this section, we mainly focus on proving Theorem 1.1. We first define the solution space $E$ by
\begin{equation*}
E:=\left\{(\tilde{\rho}-1,\mathbb{F}-\mathbb{I})\in L^{\infty}(0,T; H^{2}\times H^{2}), u\in C\big([0,T); H^{2}\big), u\in L^{2}\big((0,T); H^{2}\big)\right\}.
\end{equation*}
Since the local existence and uniqueness of the solution to the Cauchy problem \eqref{1.1}--\eqref{1.1'}, which can be established by the standard method in \cite{Chemin-Masmoudi2001}. In order to prove Theorem 1.1, the main effort is devoted to establishing a priori estimate stated in the following proposition.
\begin{Proposition}\label{p3.1}
Assume $(\tilde{\rho}, u, \mathbb{F})\in E$ is a solution of the Cauchy problem \eqref{1.1}--\eqref{1.1'} for some $T>0$. Under conditons (\ref{0.7}), (\ref{0.8}), $(\tilde{\rho}, u, \mathbb{F})$ satisfies for some sufficiently small constant $\delta>0$,
\begin{equation}\label{3.1}
\sup_{0\leq t\leq T}\|(\rho,u,\mathbb{F}-\mathbb{I})\|_{H^{2}}\leq\delta,
\end{equation}
Then the following uniform estimates are established:
\begin{equation}\label{3.2}
\|(\rho,u,\mathbb{F}-\mathbb{I})\|_{H^{2}}\lesssim\|(\rho_{0},u_{0},\mathbb{F}_{0}-\mathbb{I})\|_{H^{2}}+\|(\rho_{0},u_{0},\mathbb{F}_{0}-\mathbb{I})\|_{H^{2}}^{\frac{\sigma-1}{2\sigma}}M_{0}^{\frac{\sigma+1}{2\sigma}},
\end{equation}
and
\begin{align}\label{3.3}
&\|(\rho,\mathbb{F}-\mathbb{I})(t)\|_{H^{2}}^{2}+(1+t)^{\sigma}\|u(t)\|_{H^{2}}^{2}
+\int_{0}^{t}(1+\tau)^{\sigma}\|\nabla u(\tau)\|_{H^{2}}^{2}d\tau \nonumber \\
&\lesssim\|(u_{0},\rho_{0},\mathbb{F}_{0}-\mathbb{I})\|_{H^{2}}^{2}+M_{0}^{2},
\end{align}
where $\sigma\in(1,\sigma_{0})$ and $1<\sigma_{0}<\frac{3}{2}$.
\end{Proposition}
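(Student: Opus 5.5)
We propose to prove Proposition \ref{p3.1} by a bootstrap argument that combines three ingredients: a time-weighted energy estimate for $(u,G)$ in $H^2$ built on the dissipative system (\ref{2.4}); a spectral (Duhamel) estimate for the linearized system (\ref{2.5}) giving decay of the low frequencies of $u$; and transport estimates for $\rho$ and $\mathbb{F}-\mathbb{I}$ that exploit the integrability of $\|\nabla u\|_{H^2}$.

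\textbf{Step 1 (energy estimates).} Apply $\nabla^k$, $k=0,1,2$, to (\ref{2.4}) and test the $u$-equation with $\nabla^k u$ and the $G$-equation with $\frac12\nabla^k G$. The linear terms $\langle \diver G, u\rangle$ and $\langle D(u),G\rangle$ then cancel, which gives
\[
\frac{d}{dt}\|(u,G)\|_{H^2}^2+\|\nabla u\|_{H^2}^2\lesssim \delta\big(\|\nabla u\|_{H^2}^2+\|\nabla G\|_{H^1}^2\big)+\dots .
\]
The pressure term $\frac{1}{\rho+1}\nabla p$ is handled as follows. Write it as $\nabla p$ minus $\frac{\rho}{1+\rho}\nabla p$; the first piece vanishes against $\diver u=0$. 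For the second piece we bound $\nabla p$ through Lemma \ref{le2.6}, applied to the Stokes problem $-\Delta u+\nabla p=(1+\rho)(-u_t-u\cdot\nabla u)+\diver G$, in which $u_t$ is absorbed. Because $G$ itself gets no dissipation, we add a cross term $-\eta\langle \nabla^{k}u,\Lambda^{-1}\nabla^{k}\Lambda\mathcal{G}\rangle$ taken from (\ref{2.5}). This yields dissipation of $\|\Lambda\mathcal{G}\|_{H^1}^2$, i.e.\ of $\diver G$, which is the only part of $G$ that feeds back into $u$.

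\textbf{Step 2 (time weights and spectral decay).} Multiply the energy inequality for $u$ by $(1+t)^\sigma$. The weight produces an error $\sigma(1+t)^{\sigma-1}\|u\|_{L^2}^2$. We split this error into a high-frequency part, absorbed by the dissipation $\|\nabla u\|^2$, and a low-frequency part $(1+t)^{\sigma-1}\|u^{L}\|_{L^2}^2$. To control the low-frequency part we use the spectral analysis of the linear operator in (\ref{2.5}), namely
\[
\partial_t\binom{\hat u}{\hat{\mathcal G}}=\begin{pmatrix}-|\xi|^2&|\xi|\\-|\xi|&0\end{pmatrix}\binom{\hat u}{\hat{\mathcal G}}.
\]
For $|\xi|\ll1$ the eigenvalues are $\lambda_\pm=-\frac{|\xi|^2}{2}\pm i|\xi|(1+O(|\xi|^2))$, and the $u$-component of the projector carries an extra factor $|\xi|$ relative to $\mathcal G$. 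Consequently $\|\hat u^{L}(t)\|$ decays like $(1+t)^{-3/4-1/2}$ from $L^1$ data in $\mathcal G$ and like $(1+t)^{-3/4}$ from $L^1$ data in $u$, using Lemma \ref{le2.3}. Applying Duhamel to (\ref{2.5}), the nonlinearities are in divergence form or quadratic, so their $L^1$ norms are bounded by $\|\nabla u\|\|(u,G,\rho)\|_{H^1}$. Lemma \ref{le2.4} then gives $\|u^L(t)\|_{L^2}^2\lesssim (1+t)^{-3/2}(M_0^2+\delta^2 \mathcal{E}(t))$, where $\mathcal{E}$ denotes the weighted energy functional. Since $\sigma<3/2$, the quantity $\int (1+t)^{\sigma-1-3/2}\,dt$ is finite, and this is what allows the weight $\sigma>1$. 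The pressure term $\frac{1}{\rho+1}\nabla p$ does not fit the Helmholtz structure directly. We plan to rewrite it as $\nabla p-\frac{\rho}{1+\rho}\nabla p$, project with $\mathbb{P}$ (this is the transformation (\ref{3.6})), and treat $\mathbb{P}\big(\frac{\rho}{1+\rho}\nabla p\big)$ as an $L^1$ source of size $\|\rho\|_{L^2}\|\nabla p\|_{L^2}$.

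\textbf{Step 3 (uniform bounds for $\rho$, $\mathbb{F}-\mathbb{I}$) and closing.} The transport equations for $\rho$ and $\mathbb{F}$ give
\[
\frac{d}{dt}\|(\rho,\mathbb{F}-\mathbb{I})\|_{H^2}\lesssim\|\nabla u\|_{H^2}\big(1+\|\mathbb{F}-\mathbb{I}\|_{H^2}\big).
\]
By Cauchy--Schwarz,
\[
\int_0^\infty\|\nabla u\|_{H^2}\,dt\le\Big(\int_0^\infty(1+t)^{-\sigma}\,dt\Big)^{1/2}\Big(\int_0^\infty (1+t)^\sigma\|\nabla u\|_{H^2}^2\,dt\Big)^{1/2},
\]
which is finite precisely because $\sigma>1$. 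We then interpolate between the unweighted energy bound of size $\|\cdot\|_{H^2}$ at initial time and the $M_0$-dependent weighted bound. Optimizing a splitting time $T_*$ in $\int_0^{T_*}+\int_{T_*}^\infty$ produces the exponents $\frac{\sigma-1}{2\sigma}$ and $\frac{\sigma+1}{2\sigma}$ in (\ref{3.2}), and therefore smallness of $\|(\rho,\mathbb{F}-\mathbb{I})\|_{H^2}$ even though $M_0$ is large. Since $G-(\rho\mathbb{I}+(\mathbb{F}-\mathbb{I})+(\mathbb{F}-\mathbb{I})^\top)$ is quadratic, the $G$ and $(\rho,\mathbb{F})$ norms are equivalent, and the bootstrap closes for small $\delta$.

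The main obstacle is Step 2: we need low-frequency decay of $u$ that is strictly faster than $(1+t)^{-1/2}$, i.e.\ faster than integrable in the weighted sense, while the variable-coefficient pressure term and the undamped $G$-component are present. We will first try to extract the extra factor $|\xi|$ in the $u$-row of the Green matrix so as to gain from $L^1$ data of $G$. We will also track carefully that every source term, the pressure piece included, is bounded in $L^1$ by (quadratic in small quantities) $\times$ (weighted dissipation).
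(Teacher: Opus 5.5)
\textbf{Main gap: closing the time-weighted estimate in Step 2.} The step that actually closes the weighted estimate is missing. In Step 2 you assert $\|u^L(t)\|_{L^2}^2\lesssim(1+t)^{-3/2}\big(M_0^2+\delta^2\mathcal{E}(t)\big)$ via Lemma~\ref{le2.4}. This does not follow.

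First, the sources are not all bounded in $L^1$ by $\|\nabla u\|\,\|(u,G,\rho)\|_{H^1}$. The terms $g(\rho)(\Delta u+\diver G)$, and in your formulation also $\frac{\rho}{1+\rho}\nabla p$, only give $\|\rho\|_{L^2}\big(\|\nabla^2u\|_{L^2}+\|\Lambda\mathcal{G}\|_{L^2}\big)$. Here $\rho$ does not decay, and $\nabla^2u$, $\Lambda\mathcal{G}$ are controlled by a $\sigma$-weighted functional only through $\int_0^t(1+\tau)^{\sigma}(\cdot)^2d\tau$ or the pointwise rate $(1+\tau)^{-\sigma/2}$. With this information, Cauchy--Schwarz and Lemma~\ref{le2.4} give at best
\[
\delta^2\Big(\int_0^t(1+t-\tau)^{-\frac32}(1+\tau)^{-\sigma}\,d\tau\Big)\mathcal{E}(t)\lesssim\delta^2(1+t)^{-\sigma}\mathcal{E}(t).
\]
Nothing better follows from a $\sigma$-weighted bound: take a source concentrated near $\tau=t$.

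Inserted into the weight error $\sigma\int_0^t(1+\tau)^{\sigma-1}\|u^L\|_{L^2}^2d\tau$, this produces $\delta^2\mathcal{E}(t)\ln(1+t)$, so the bootstrap does not close. Obtaining $(1+t)^{-3/2}$ for the Duhamel term needs dissipation weighted by $(1+\tau)^{3/2}$. At that critical weight the energy method itself loses a logarithm, because its error $\int_0^t(1+\tau)^{1/2}\|u\|_{L^2}^2d\tau$ diverges logarithmically at the linear decay rate.

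This borderline is exactly what the paper's extra device handles. The assistant energy $\mathcal{E}_a(t)=\sup_{t'\le t}(1+t')^{3/2}\|(u,\mathcal{G})(t')\|_{H^2}^2$ is closed by Gronwall with a loss $(1+t)^{C_0\delta}$, see \eqref{3.37}--\eqref{3.42}. This gives $\|(u,\mathcal{G})\|_{L^2}^2\lesssim(1+t)^{-3/2+C_0\delta}$, and then \eqref{3.44} and \eqref{3.45} follow. It is also why $C_0\delta<\sigma_0-\sigma$ is imposed.

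A one-tier repair is also possible:
\begin{itemize}
\item split the Duhamel integral at $\tau=t/2$;
\item bound the $[0,t/2]$ part by $\delta(1+t)^{-3/4}\mathcal{E}^{1/2}$, using $\sigma>1$;
\item estimate the $[t/2,t]$ part only after integrating against $(1+t)^{\sigma-1}$, via Fubini and $\int_\tau^{2\tau}(1+t)^{\sigma-1}dt\lesssim(1+\tau)^{\sigma}$.
\end{itemize}
Your proposal contains neither argument.

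\textbf{Further points.}

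\emph{No extra $|\xi|$ in the Green matrix.} The extra factor $|\xi|$ you expect in the $u$-row does not exist. For $|\xi|<2$,
\[
\widehat{K}_2=|\xi|G_1(t)=\frac{2}{\sqrt{4-|\xi|^2}}\,e^{-\frac{|\xi|^2t}{2}}\sin(\omega t),\qquad \omega=\frac{|\xi|}{2}\sqrt{4-|\xi|^2},
\]
so data in $\mathcal{G}$ also give only $(1+t)^{-3/4}$. This is harmless, since that rate suffices for $\sigma<\frac32$, but the difficulty lies in the nonlinear Duhamel term, not in the data.

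\emph{Only the solenoidal part of $\diver G$ is damped.} The steady solution $u\equiv0$, $\mathbb{F}\equiv\mathbb{I}$, $\tilde\rho=1+\rho_0(x)$, $p=\tilde\rho$ has $G=\rho_0\mathbb{I}$, $\diver G=\nabla\rho_0$ and $\mathbb{P}\mathcal{G}=0$. So the cross term damps only $\Lambda\mathbb{P}\mathcal{G}$, not all of $\diver G$ as you claim in Step 1; the paper's \eqref{3.8} has the same defect as written. The curl-free part $\nabla\Delta^{-1}\diver\diver G$ must be absorbed into the pressure.

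\emph{Where the time weight goes.} The weight must sit on $u$ and $\mathbb{P}\mathcal{G}$ jointly. Weighting $u$ alone breaks the cancellation of the linear coupling. Weighting your full $(u,G)$ energy fails because $\|G\|_{L^2}$ need not decay.

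\emph{Pressure.} Your treatment genuinely differs from \eqref{3.6}: you split off $\frac{\rho}{1+\rho}\nabla p$ and bound $\nabla p$ elliptically using the smallness of $\rho$. The paper instead lets $\mathbb{P}$ annihilate $\frac{1}{\rho+1}\nabla p$ as if it were a gradient. Yours is the sounder route and accommodates the pressure absorption above naturally. It does, however, add a source of the non-decaying type discussed in the first part.

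\emph{Splitting-time exponents.} With $\epsilon_0:=\|(\rho_0,u_0,\mathbb{F}_0-\mathbb{I})\|_{H^2}$, optimizing a splitting time gives $\int_0^\infty\|\nabla u\|_{H^2}dt\lesssim\epsilon_0^{\frac{\sigma-1}{\sigma}}(M_0+\epsilon_0)^{\frac1\sigma}$, not the exponents of \eqref{3.2}. This bound is stronger and implies \eqref{3.2}; the paper's exponents come from the three-factor H\"older inequality \eqref{3.47}.

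\emph{Norm equivalence.} $\|G\|_{H^2}$ and $\|(\rho,\mathbb{F}-\mathbb{I})\|_{H^2}$ are not equivalent. Only $\|G\|_{H^2}\lesssim\|(\rho,\mathbb{F}-\mathbb{I})\|_{H^2}$ holds, but that is the direction you need.
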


Proposition \ref{p3.1} will be justified by several lemmas in the rest of this section. Before proceeding any further, we assume a
priori estimate that
\begin{equation}\label{3.4}
\|(\rho,u,G)\|_{H^{2}}\leq\delta\ll1,
\end{equation}
where $\delta$ satisfies $C_{0}\delta<\sigma_{0}-\sigma$, $C_{0}$ is a positive constant independent of $\delta$ , $\sigma$ and $\sigma_{0}$, which can be completely determined later by $(\ref{3.8})$, $(\ref{3.39})$ and $(\ref{3.40})$. Moreover, from (\ref{3.4}) and the definitions of $\mathcal{G}$ and $G$, we have
\begin{equation}\label{3.5}
\|\mathcal{G}\|_{H^{2}}\lesssim\|G\|_{H^{2}}\lesssim\delta.
\end{equation}

Next, we first establish the energy estimates of $(u,\mathcal{G})$. The main difficulty is to deal with the pressure term $\frac{1}{\rho+1}\nabla p$ in the first equation of system (\ref{2.5}). Through the method in \cite{Ai-Wang-Wu2026}, we can transform the pressure $p$ to $\tilde{p}$ by
\begin{equation}\label{3.6}
\nabla\tilde{p}:=\frac{1}{\rho+1}\nabla p.
\end{equation}
Considering (\ref{3.6}), the system $(\ref{2.5})$ can be written as
\begin{equation}\label{3.7}
\begin{cases}
u_{t}-\Delta u+u\cdot\nabla u+(1-\frac{1}{\rho+1})\Delta u+\nabla\tilde{p}+(1-\frac{1}{\rho+1})\Lambda\mathcal{G}=\Lambda\mathcal{G},\\
\mathcal{G}_{t}+\Lambda^{-1}\diver (u\cdot\nabla G)+\Lambda^{-1}\diver Q(\nabla u, G)=-\Lambda u, \\
\diver u=0, \qquad\qquad (x,t)\in\mathbb{R}^{3}\times\mathbb{R}^{+}.
\end{cases}
\end{equation}
The energy estimates of $(u,\mathcal{G})$ can be established in the following lemma.
\begin{Lemma}\label{le3.1}
Under the assumptions in Proposition \ref{p3.1}, then there exists a sufficiently small constant $\delta>0$ such that
\begin{equation}\label{3.8}
\sup_{0\leq t'\leq t}\|(u,\mathcal{G})(t')\|_{H^{2}}^{2}+\int_{0}^{t}\left(\|\Lambda u(\tau)\|_{H^{2}}^{2}+\|\Lambda \mathcal{G}(\tau)\|_{H^{1}}^{2}\right)d\tau
\lesssim\|(\rho_{0},u_{0},\mathbb{F}_{0}-\mathbb{I})\|_{H^{2}}^{2}.
\end{equation}
\end{Lemma}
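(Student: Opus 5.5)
Our plan is the standard energy method applied to the reformulated system \eqref{3.7}. We test with $\nabla^k u$ and $\nabla^k\mathcal{G}$, for $k=0,1,2$, to get the basic energy inequality, and then add a cross term $\langle \nabla^k u,-\Lambda^{-1}\nabla^{k}\Lambda\mathcal{G}\rangle$-type (equivalently $-\langle \Lambda^{k-1}u,\Lambda^{k}\mathcal{G}\rangle$ for $k=1,2$) to recover the dissipation of $\Lambda\mathcal{G}$ in $H^1$. All nonlinear terms are then absorbed using the smallness \eqref{3.4}--\eqref{3.5}.

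\textbf{Step 1 (basic energy).} We apply $\nabla^k$ ($0\le k\le2$) to \eqref{3.7}$_1$ and \eqref{3.7}$_2$, take the $L^2$ inner product with $\nabla^k u$ and $\nabla^k\mathcal{G}$, and add. The linear coupling cancels exactly, since $\langle\Lambda\nabla^k\mathcal{G},\nabla^ku\rangle-\langle\Lambda\nabla^ku,\nabla^k\mathcal{G}\rangle=0$. The pressure term $\langle\nabla^{k+1}\tilde p,\nabla^ku\rangle$ vanishes because $\diver u=0$; this is exactly why we use \eqref{3.6}. This yields
\[
\tfrac12\tfrac{d}{dt}\|\nabla^k(u,\mathcal{G})\|_{L^2}^2+\|\nabla^{k+1}u\|_{L^2}^2=\text{(nonlinear terms)}.
\]
The nonlinear terms are $u\cdot\nabla u$, $(1-\frac1{\rho+1})\Delta u$, $(1-\frac1{\rho+1})\Lambda\mathcal{G}$, $\Lambda^{-1}\diver(u\cdot\nabla G)$ and $\Lambda^{-1}\diver Q$. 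We bound them by H\"older, Gagliardo--Nirenberg (Lemma \ref{le2.1}) and Lemma \ref{le2.2}, taking one factor $\nabla^{j}\rho$ or $\nabla^jG$ in $L^\infty$/$L^3$ (each $\lesssim\delta$). For the top order we move one derivative onto $\nabla^k u$ by integration by parts, so that every term is $\lesssim\delta(\|\nabla u\|_{H^2}^2+\|\Lambda\mathcal{G}\|_{H^1}^2)$. The transport term $\langle\nabla^2\Lambda^{-1}\diver(u\cdot\nabla G),\nabla^2\mathcal{G}\rangle$ at $k=2$ is the delicate one, since $\Lambda^{-1}\diver$ destroys the commutator structure. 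We bound it by $\|\nabla^2(u\cdot\nabla G)\|_{L^2}\|\nabla^2\mathcal{G}\|_{L^2}$, with $\|\nabla^2(u\cdot\nabla G)\|_{L^2}\lesssim\|u\|_{L^\infty}\|\nabla^3G\|+\dots$. This appears to require $H^3$ of $G$, so instead we write $\Lambda^{-1}\diver(u\cdot\nabla G)=u\cdot\nabla\mathcal{G}+[\Lambda^{-1}\diver,u\cdot\nabla]G$. The first piece then vanishes after integration by parts, and the commutator is controlled by $\|\nabla u\|_{H^2}\|G\|_{H^2}$ via a Kato--Ponce type estimate. We expect this step to be the main obstacle.

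\textbf{Step 2 (dissipation of $\mathcal{G}$).} For $k=0,1$ we take the inner product of $\nabla^k$\eqref{3.7}$_1$ with $-\nabla^k\Lambda\mathcal{G}$ and of $\nabla^k$\eqref{3.7}$_2$ with $-\nabla^k\Lambda u$. The pressure again drops, because $\Lambda\mathcal{G}=\diver G$ paired with $\nabla\tilde p$ does not vanish in general. To handle this we instead use $\mathbb P$: the term $\langle\nabla\tilde p,\Lambda\mathcal{G}\rangle$ is estimated using \eqref{3.6}, $\nabla\tilde p=\nabla p/(1+\rho)$. Taking the divergence of \eqref{2.4}$_1$ gives an elliptic equation for $p$ whose source is quadratic apart from $\diver\diver G$ weighted by $\rho$. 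Hence $\nabla\tilde p-\mathbb Q\Lambda\mathcal{G}$ is a small quadratic quantity, and the linear part $\langle\mathbb Q\Lambda\mathcal G,\Lambda\mathcal G\rangle$ is dealt with by noting that only the divergence-free part of $\Lambda\mathcal{G}$ is coupled to $u$. This gives
\[
-\tfrac{d}{dt}\langle\nabla^ku,\nabla^k\Lambda\mathcal{G}\rangle+\|\nabla^k\Lambda\mathcal{G}\|^2\lesssim\|\nabla^{k+1}u\|_{H^1}^2+\delta(\|\nabla u\|_{H^2}^2+\|\Lambda\mathcal{G}\|_{H^1}^2).
\]

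\textbf{Step 3.} We add a small multiple $\eta$ of Step 2 to Step 1 and note that the resulting functional is equivalent to $\|(u,\mathcal{G})\|_{H^2}^2$. We then absorb the $\delta$-terms for $\delta$ small and integrate in time. Finally we bound $\|(u_0,\mathcal G_0)\|_{H^2}$ by $\|(\rho_0,u_0,\mathbb F_0-\mathbb I)\|_{H^2}$ using \eqref{2.1} and \eqref{3.5}, which gives \eqref{3.8}.
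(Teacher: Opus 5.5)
\textbf{There is a genuine gap in Step 2. It cannot be closed, because the estimate you are aiming at is false for the full $\mathcal{G}$.}

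Take the divergence of \eqref{3.7}$_2$ and use $\diver\Lambda u=0$. This gives $\partial_t\diver\mathcal{G}=-\diver\big(\Lambda^{-1}\diver(u\cdot\nabla G)+\Lambda^{-1}\diver Q(\nabla u,G)\big)$. So the gradient part $\mathbb{Q}\mathcal{G}$, with $\mathbb{Q}:=\mathrm{Id}-\mathbb{P}$, has no linear damping at all.

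Your own Step 2 computation already shows this. With $\nabla\tilde p\approx\mathbb{Q}\Lambda\mathcal{G}$, the quadratic form you actually obtain is $\|\nabla^k\Lambda\mathcal{G}\|_{L^2}^2-\langle\nabla^k\mathbb{Q}\Lambda\mathcal{G},\nabla^k\Lambda\mathcal{G}\rangle_{L^2}=\|\nabla^k\mathbb{P}\Lambda\mathcal{G}\|_{L^2}^2$. Your remark that ``only the divergence-free part of $\Lambda\mathcal{G}$ is coupled to $u$'' confirms this loss rather than repairing it.

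Here is a concrete counterexample. For small $\phi\in C_c^\infty(\mathbb{R}^3)$, $(\tilde\rho,u,\mathbb{F},p)=(1,0,\sqrt{1+\phi}\,\mathbb{I},\phi)$ is a global steady solution of \eqref{1.1}. It satisfies all hypotheses of Proposition~\ref{p3.1}, with $G=\phi\,\mathbb{I}$ and $\Lambda\mathcal{G}=\diver G=\nabla\phi$. Hence $\int_0^t\|\Lambda\mathcal{G}\|_{H^1}^2d\tau=t\|\nabla\phi\|_{H^1}^2\to\infty$, while the right-hand side of \eqref{3.8} stays fixed.

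Following the paper does not rescue this, because its proof has the same hole. In \eqref{3.16}, the pairing of $\Lambda^{k+1}\mathbb{P}\mathcal{G}$ with $\Lambda^{k+1}\mathcal{G}$ equals $\|\Lambda^{k+1}\mathbb{P}\mathcal{G}\|_{L^2}^2$, but it is recorded as $\|\Lambda^{k+1}\mathcal{G}\|_{L^2}^2$.

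\textbf{The same defect also breaks Step 1.} There you bound terms such as $\langle\nabla^k(g(\rho)\Lambda\mathcal{G}),\nabla^k u\rangle_{L^2}$, with $g(\rho)=\rho/(1+\rho)$, by $\delta\|\Lambda\mathcal{G}\|_{H^1}\|\nabla u\|_{H^2}$. That bound is absorbable only if the full $\|\Lambda\mathcal{G}\|_{H^1}^2$ is dissipated.

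Moreover, the identity $\langle\nabla^k\nabla\tilde p,\nabla^k u\rangle_{L^2}=0$ is not justified. The field $(1+\rho)^{-1}\nabla p$ is in general not a gradient: its curl is $\nabla(1+\rho)^{-1}\times\nabla p$. Its non-gradient defect $-g(\rho)\nabla p$ is again of size $\delta\|\mathbb{Q}\diver G\|$. The paper makes the same identification when it lets $\mathbb{P}$ annihilate $\nabla\tilde p$.

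What your scheme can prove is \eqref{3.8} with $\mathcal{G}$ replaced by $\mathbb{P}\mathcal{G}$. Write $\diver G=\Lambda\mathbb{P}\mathcal{G}+\nabla\pi$ and absorb $\nabla\pi$ into the pressure before dividing by $\tilde\rho$. With $p'=p-\pi$ one gets $u_t+u\cdot\nabla u+\frac{1}{1+\rho}\nabla p'=\frac{1}{1+\rho}(\Delta u+\Lambda\mathbb{P}\mathcal{G})$. The identity $\diver\big(\frac{1}{1+\rho}\nabla p'\big)=-\diver\big(u\cdot\nabla u+g(\rho)(\Delta u+\Lambda\mathbb{P}\mathcal{G})\big)$ then shows that $\nabla p'$ is quadratic.

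In that setting, $\langle\nabla^k\nabla p',\nabla^k u\rangle_{L^2}=0$ holds honestly. Every nonlinear term contains a factor controlled by $\|\nabla u\|_{H^2}+\|\Lambda\mathbb{P}\mathcal{G}\|_{H^1}$. The gradient part $\mathbb{Q}\mathcal{G}$ only needs to stay bounded, which \eqref{3.5} provides.

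Your commutator treatment of the top-order transport term works there, with one extra commutator $[\mathbb{P},u\cdot\nabla]\mathcal{G}$. It is a legitimate alternative to the paper's route, which uses the identity $\diver(u\cdot\nabla G)=\nabla u\,\nabla G+u\cdot\nabla\Lambda\mathcal{G}$ followed by $\langle u\cdot\nabla\Lambda^{2}\mathcal{G},\Lambda^{2}\mathcal{G}\rangle_{L^2}=0$. But that term was never the real obstacle.
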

\begin{proof}
Applying the operators $\Lambda^{k}\mathbb{P}, \Lambda^{k}, (k=0,1,2)$ to $(\ref{3.7})_{1}, (\ref{3.7})_{2}$, respectively. We get
\begin{equation}\label{3.9}
\begin{cases}
\Lambda^{k}u_{t}-\Lambda^{k}\Delta u+\Lambda^{k}\mathbb{P}(u\cdot\nabla u)+\Lambda^{k}\mathbb{P}\big[\frac{\rho}{\rho+1}(\Delta u+\diver  G)\big]=\Lambda^{k+1}\mathbb{P} \mathcal{G},\\
\Lambda^{k}\mathcal{G}_{t}+\Lambda^{k-1}\diver (u\cdot\nabla G)+\Lambda^{k-1}\diver Q(\nabla u, G)=-\Lambda^{k+1} u.
\end{cases}
\end{equation}
Then taking inner product with $(\Lambda^{k}u,\Lambda^{k}\mathcal{G})$ for $(\ref{3.9})_{1}, (\ref{3.9})_{2}$, respectively. We have
\begin{align}\label{3.10}
\frac{1}{2}\frac{d}{dt}\|\left(\Lambda^{k} u,\Lambda^{k}\mathcal{G}\right)\|_{L^{2}}^{2}+\|\Lambda^{k+1} u\|_{L^{2}}^{2}=I_{1}+I_{2}+I_{3}+I_{4}+I_{5}+I_{6},
\end{align}
where
\begin{align*}
&I_{1}=\left\langle\Lambda^{k+1}\mathbb{P} \mathcal{G},\Lambda^{k}u\right\rangle_{L^{2}};\quad~~~~I_{2}=-\left\langle\Lambda^{k+1}u,\Lambda^{k}\mathcal{G}\right\rangle_{L^{2}};\\
&I_{3}=-\left\langle\Lambda^{k}\mathbb{P}(u\cdot\nabla u),\Lambda^{k}u\right\rangle_{L^{2}}; ~~
I_{4}=-\left\langle\Lambda^{k}\mathbb{P}\Big[\frac{\rho}{\rho+1}(\Delta u+\diver  G)\Big],\Lambda^{k}u\right\rangle_{L^{2}};\\
&I_{5}=-\left\langle\Lambda^{k-1}\diver Q(\nabla u, G),\Lambda^{k}\mathcal{G}\right\rangle_{L^{2}};~~I_{6}=-\left\langle\Lambda^{k-1}\diver (u\cdot\nabla G),\Lambda^{k}\mathcal{G}\right\rangle_{L^{2}}.
\end{align*}
Making use of integration by parts and considering $\mathbb{P}u=u$, one gets
\begin{equation*}
I_{1}+I_{2}=0.
\end{equation*}
For $I_{3}$, it follows from integration by parts, the divergence-free condition, H\"{o}lder's inequality, Lemma  and Sobolev imbedding theorem, we have
\begin{align}\label{3.11}
I_{3}&\lesssim\Big(\|\Lambda^{-1}(u\cdot\nabla u)\|_{L^{2}}+\|u\cdot\nabla u\|_{H^{1}}\Big)\|\Lambda u\|_{H^{2}} \nonumber \\
&\lesssim\Big(\|u\|_{L^{3}}\|\nabla u\|_{L^{2}}+\|u\|_{L^{\infty}}\|\nabla u\|_{H^{1}}+\|\nabla u\|_{L^{6}}\|\nabla u\|_{L^{3}}\Big)\|\Lambda u\|_{H^{2}} \nonumber \\
&\lesssim\delta\|\Lambda u\|_{H^{2}}^{2}.
\end{align}
Let
\begin{align*}
g(\rho):=\frac{\rho}{\rho+1}.
\end{align*}
For $I_{4}, I_{5}$, utilizing integration by parts, Sobolev imbedding theorem, H\"{o}lder
and Young's inequalities and the boundedness of Riesz operator $\mathcal{R}_{i}=\Lambda^{-1}\nabla_{i}$ from $L^{2}$ into itself, we get
\begin{align}\label{3.12}
I_{4}&\lesssim\Big(\|\Lambda^{-1}\big(g(\rho)\Delta u+g(\rho)\Lambda\mathcal{G}\big)\|_{L^{2}}+\|g(\rho)\Delta u+g(\rho)\Lambda\mathcal{G}\|_{H^{1}}\Big)\|\Lambda u\|_{H^{2}} \nonumber \\
&\lesssim\Big(\left\|g(\rho)\right\|_{W^{1,3}}+\left\|g(\rho)\right\|_{L^{\infty}}\Big)
\Big(\|\Lambda^{2} u\|_{H^{1}}+\|\Lambda\mathcal{G}\|_{H^{1}}\Big)\|\Lambda u\|_{H^{2}} \nonumber \\
&\lesssim\delta\Big(\|\Lambda u\|_{H^{2}}^{2}+\|\Lambda\mathcal{G}\|_{H^{1}}^{2}\Big),
\end{align}
and
\begin{align}\label{3.13}
I_{5}&\lesssim\|\Lambda^{-2}\diver Q(\nabla u, G)\|_{L^{2}}\|\Lambda\mathcal{G}\|_{L^{2}}+\|\diver Q(\nabla u, G)\|_{H^{1}}\|\Lambda\mathcal{G}\|_{H^{1}} \nonumber \\
&\lesssim\|G\|_{L^{3}}\|\nabla u\|_{L^{2}}\|\Lambda\mathcal{G}\|_{L^{2}}+\Big(\|G\|_{L^{\infty}}\|\Lambda^{2} u\|_{H^{1}}+\|\Lambda\mathcal{G}\|_{L^{3}}\|\Lambda^{2} u\|_{H^{1}}+\|\nabla\Lambda\mathcal{G}\|_{L^{2}}\|\nabla u\|_{L^{\infty}}\Big)\|\Lambda\mathcal{G}\|_{H^{1}} \nonumber \\
&\lesssim\|G\|_{H^{1}}\|\nabla u\|_{L^{2}}\|\Lambda\mathcal{G}\|_{L^{2}}+\Big(\|G\|_{H^{2}}\|\Lambda^{2} u\|_{H^{1}}+\|\Lambda\mathcal{G}\|_{H^{1}}\|\Lambda^{2} u\|_{H^{1}}\Big)\|\Lambda\mathcal{G}\|_{H^{1}} \nonumber \\
&\lesssim\delta\Big(\|\Lambda u\|_{H^{2}}^{2}+\|\Lambda\mathcal{G}\|_{H^{1}}^{2}\Big).
\end{align}
For $I_{6}$, similar to (\ref{3.12}) and (\ref{3.13}), using integration by parts and the divergence-free condition, we have
\begin{align}\label{3.14}
I_{6}&=-\left\langle\Lambda^{-1}\diver (u\cdot\nabla G),\mathcal{G}\right\rangle_{L^{2}}-\left\langle\diver (u\cdot\nabla G),\Lambda\mathcal{G}\right\rangle_{L^{2}}-\left\langle\Lambda(\diver (u\cdot\nabla G)),\Lambda^{2}\mathcal{G}\right\rangle_{L^{2}}\nonumber \\
&\lesssim\|G\|_{L^{3}}\|\Lambda u\|_{L^{2}}\|\Lambda\mathcal{G}\|_{L^{2}}+\|\nabla u\nabla G+u\cdot\nabla\Lambda\mathcal{G}\|_{L^{2}}\|\Lambda\mathcal{G}\|_{L^{2}}+\Big(\|\Lambda(\nabla u\nabla G)\|_{L^{2}}+\|\Lambda u\cdot\nabla\Lambda\mathcal{G}\|_{L^{2}}\Big)\|\Lambda^{2}\mathcal{G}\|_{L^{2}} \nonumber \\
&\ \ \ \ -\left\langle u\cdot\nabla\Lambda^{2}\mathcal{G},\Lambda^{2}\mathcal{G}\right\rangle_{L^{2}} \nonumber \\
&\lesssim\|G\|_{H^{1}}\|\Lambda u\|_{L^{2}}\|\Lambda\mathcal{G}\|_{L^{2}}+\Big(\|\nabla u\|_{L^{6}}\|\nabla G\|_{L^{3}}+\|u\|_{L^{\infty}}\|\nabla\Lambda\mathcal{G}\|_{L^{2}}\Big)\|\Lambda\mathcal{G}\|_{L^{2}} \nonumber \\
&\ \ \ \ +\Big(\|\Lambda\nabla u\|_{L^{6}}\|\nabla G\|_{L^{3}}+\|\nabla u\|_{L^{\infty}}\|\nabla\Lambda G\|_{L^{2}}+\|\Lambda u\|_{L^{\infty}}\|\nabla\Lambda\mathcal{G}\|_{L^{2}}\Big)\|\Lambda^{2}\mathcal{G}\|_{L^{2}} \nonumber \\
&\lesssim\delta\Big(\|\Lambda u\|_{H^{2}}^{2}+\|\Lambda\mathcal{G}\|_{H^{1}}^{2}\Big).
\end{align}
Putting all the estimates of (\ref{3.11})-(\ref{3.14}) together, and selecting $\delta$ small enough, we get
\begin{equation}\label{3.15}
\frac{d}{dt}\|\left(u,\mathcal{G}\right)\|_{H^{2}}^{2}+\|\Lambda u\|_{H^{2}}^{2}\lesssim\delta\|\Lambda\mathcal{G}\|_{H^{1}}^{2}.
\end{equation}

Next, applying the operators $\Lambda^{k}\mathbb{P}, \Lambda^{k+1}, (k=0,1)$ to $(\ref{3.7})_{1}, (\ref{3.7})_{2}$, respectively, and taking inner product of the resulting equations with $(-\Lambda^{k+1}\mathcal{G},-\Lambda^{k}u)$. We have
\begin{align}\label{3.16}
\frac{d}{dt}\left\langle\Lambda^{k}u,-\Lambda^{k+1}\mathcal{G}\right\rangle_{L^{2}}+\|\Lambda^{k+1} \mathcal{G}\|_{L^{2}}^{2}=I_{7}+I_{8}+I_{9}+I_{10}+I_{11}+I_{12},
\end{align}
where
\begin{align*}
&I_{7}=\left\langle\Lambda^{k+2}u,\Lambda^{k}u\right\rangle_{L^{2}};\quad~~~~I_{8}=\left\langle\Lambda^{k+2}u,\Lambda^{k+1}\mathcal{G}\right\rangle_{L^{2}};\\
&I_{9}=\left\langle\Lambda^{k}\mathbb{P}(u\cdot\nabla u),\Lambda^{k+1}\mathcal{G}\right\rangle_{L^{2}}; ~~
I_{10}=\left\langle\Lambda^{k}\mathbb{P}\Big[g(\rho)(\Delta u+\diver  G)\Big],\Lambda^{k+1}\mathcal{G}\right\rangle_{L^{2}};\\
&I_{11}=\left\langle\Lambda^{k}\diver Q(\nabla u, G),\Lambda^{k}u\right\rangle_{L^{2}};~~I_{12}=\left\langle\Lambda^{k}\diver (u\cdot\nabla G),\Lambda^{k}u\right\rangle_{L^{2}}.
\end{align*}
For $I_{7}, I_{8}$, using integration by parts, H\"{o}lder's and Young's inequalities, we have
\begin{align}\label{3.17}
I_{7}+I_{8}\leq C_{0}\|\Lambda u\|_{H^{1}}^{2}+C_{\delta_{0}}\|\Lambda u\|_{H^{2}}^{2}+\delta_{0}\|\Lambda\mathcal{G}\|_{H^{1}}^{2}.
\end{align}
For $I_{9}, I_{10}$, similar to (\ref{3.11}) and (\ref{3.12}), we get
\begin{align}\label{3.18}
I_{9}+I_{10}&\lesssim\Big(\|\mathbb{P}(u\cdot\nabla u)\|_{H^{1}}+\|\mathbb{P}\big[g(\rho)(\Delta u+\diver  G)\big]\|_{H^{1}}\Big)\|\Lambda\mathcal{G}\|_{H^{1}} \nonumber \\
&\lesssim\Big(\|u\|_{L^{\infty}}\|\nabla u\|_{H^{1}}+\|\nabla u\|_{L^{6}}\|\nabla u\|_{L^{3}}\Big)\|\Lambda\mathcal{G}\|_{H^{1}} \nonumber \\
&\ \ \ \ +\Big(\|g(\rho)\|_{L^{\infty}}+\|\nabla g(\rho)\|_{L^{3}}\Big)\Big(\|\Lambda^{2} u\|_{H^{1}}+\|\Lambda\mathcal{G}\|_{H^{1}}\Big)\|\Lambda\mathcal{G}\|_{H^{1}}\nonumber \\
&\lesssim\delta\Big(\|\Lambda u\|_{H^{2}}^{2}+\|\Lambda\mathcal{G}\|_{H^{1}}^{2}\Big).
\end{align}
For $I_{11}, I_{12}$, similar to (\ref{3.14}), we get
\begin{align}\label{3.19}
I_{11}+I_{12}&=\left\langle\Lambda^{k-1}\diver Q(\nabla u, G),\Lambda^{k+1}u\right\rangle_{L^{2}}+\left\langle\Lambda^{k-1}\diver (u\cdot\nabla G),\Lambda^{k+1}u\right\rangle_{L^{2}} \nonumber \\
&\lesssim\Big(\|\Lambda^{-1}\diver Q(\nabla u, G)\|_{H^{1}}+\|\Lambda^{-1}\diver (u\cdot\nabla G)\|_{H^{1}}\Big)\|\Lambda u\|_{H^{1}} \nonumber \\
&\lesssim\Big(\|G\|_{L^{\infty}}\|\nabla u\|_{H^{1}}+\|\nabla G\|_{L^{3}}\|\nabla u\|_{L^{6}}\Big)\|\Lambda u\|_{H^{1}} \nonumber \\
&\ \ \ \ +\Big(\|u\|_{L^{\infty}}\|\nabla G\|_{H^{1}}+\|\nabla G\|_{L^{3}}\|\nabla u\|_{L^{6}}\Big)\|\Lambda u\|_{H^{1}}\nonumber \\
&\lesssim\delta\Big(\|\Lambda u\|_{H^{2}}^{2}+\|\Lambda\mathcal{G}\|_{H^{1}}^{2}\Big).
\end{align}
Combining the estimates of (\ref{3.17})-(\ref{3.19}) together, and selecting $\delta, \delta_{0}$ small enough, we have
\begin{equation}\label{3.20}
\frac{d}{dt}\left\langle\Lambda^{k}u,-\Lambda^{k+1}\mathcal{G}\right\rangle_{L^{2}}+\|\Lambda^{k+1} \mathcal{G}\|_{L^{2}}^{2}\lesssim\|\Lambda u\|_{H^{2}}^{2}.
\end{equation}

Finally, multiplying (\ref{3.15}) by some suitable large constant, then adding it to (\ref{3.20}), and integrating the result from $0$ to $t$, and combining $\left\langle\Lambda^{k}u,-\Lambda^{k+1}\mathcal{G}\right\rangle_{L^{2}}\geq-\frac{1}{2}(\|u\|_{H^{1}}^{2}+\|\mathcal{G}\|_{H^{2}}^{2})$. We immediately get (\ref{3.8}). This completes the proof of Lemma \ref{le3.1}.

\end{proof}

The following lemma is about the estimates of $\rho$ and $\mathbb{F}-\mathbb{I}$.
\begin{Lemma}\label{le3.2}
Under conditons (\ref{0.7}), (\ref{0.8}), then the following estimate is given
\begin{align}\label{3.21}
\|(\rho,\mathbb{F}-\mathbb{I})(t)\|_{H^{2}}^{2}&\lesssim\sup_{0\leq \tau\leq t}\|(\rho,\mathbb{F}-\mathbb{I})(\tau)\|_{H^{2}}^{2}\int_{0}^{t}\|\nabla u(\tau)\|_{H^{2}}d\tau+\left(\int_{0}^{t}\|\nabla u(\tau)\|_{H^{2}}d\tau\right)^{2}\nonumber \\
&\ \ \ \ +\|(\rho_{0},\mathbb{F}_{0}-\mathbb{I})\|_{H^{2}}^{2}
\end{align}
for any $t>0$.
\end{Lemma}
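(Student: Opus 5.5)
Since $\rho$ and $\mathbb{F}-\mathbb{I}$ satisfy pure transport equations with no dissipation, we run a direct $H^2$ energy estimate on
\begin{equation*}
\rho_t+u\cdot\nabla\rho=0,\qquad (\mathbb{F}-\mathbb{I})_t+u\cdot\nabla(\mathbb{F}-\mathbb{I})=\nabla u(\mathbb{F}-\mathbb{I})+\nabla u,
\end{equation*}
where the second equation is $(\ref{1.1})_3$ rewritten with $\mathbb{E}=\mathbb{F}-\mathbb{I}$. For each $k=0,1,2$ we apply $\nabla^k$, take the $L^2$ inner product with $\nabla^k\rho$ and $\nabla^k\mathbb{E}$, and add.

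\textbf{Density.} For the transport term we use $\diver u=0$, so $\langle u\cdot\nabla\nabla^k\rho,\nabla^k\rho\rangle=0$. The commutators $[\nabla^k,u\cdot\nabla]\rho$ with $k\le 2$ are bounded by
\begin{equation*}
\|\nabla u\|_{L^\infty}\|\nabla\rho\|_{H^1}+\|\nabla^2u\|_{L^3}\|\nabla\rho\|_{L^6}\lesssim\|\nabla u\|_{H^2}\|\rho\|_{H^2},
\end{equation*}
using the Sobolev embedding $H^2\hookrightarrow W^{1,3}\cap L^\infty$ for $\nabla u$. This gives $\frac{d}{dt}\|\rho\|_{H^2}\lesssim\|\nabla u\|_{H^2}\|\rho\|_{H^2}$.

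\textbf{Deformation tensor.} The transport and commutator terms are handled exactly as for $\rho$. The bilinear term $\nabla^k(\nabla u\,\mathbb{E})$ is also bounded by $\|\nabla u\|_{H^2}\|\mathbb{E}\|_{H^2}$. For $k=2$ one factor is $\nabla^3u\,\mathbb{E}$, which we control by $\|\nabla^3u\|_{L^2}\|\mathbb{E}\|_{L^\infty}$; the cross terms are controlled by $\|\nabla^2u\|_{L^3}\|\nabla\mathbb{E}\|_{L^6}$. The linear forcing term $\nabla u$ is the only term with no small factor, and it gives $\langle\nabla^k\nabla u,\nabla^k\mathbb{E}\rangle\le\|\nabla u\|_{H^2}\|\mathbb{E}\|_{H^2}$. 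Altogether
\begin{equation*}
\frac{d}{dt}\|(\rho,\mathbb{E})\|_{H^2}^2\lesssim\|\nabla u\|_{H^2}\|(\rho,\mathbb{E})\|_{H^2}^2+\|\nabla u\|_{H^2}\|\mathbb{E}\|_{H^2}.
\end{equation*}

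\textbf{Integration in time.} Integrating from $0$ to $t$, the first term is bounded by $\sup_{\tau\le t}\|(\rho,\mathbb{E})\|_{H^2}^2\int_0^t\|\nabla u\|_{H^2}d\tau$. For the linear term,
\begin{equation*}
\int_0^t\|\nabla u\|_{H^2}\|\mathbb{E}\|_{H^2}d\tau\le\sup_{\tau\le t}\|\mathbb{E}\|_{H^2}\int_0^t\|\nabla u\|_{H^2}d\tau\le\varepsilon\sup_{\tau\le t}\|\mathbb{E}\|_{H^2}^2+C_\varepsilon\Big(\int_0^t\|\nabla u\|_{H^2}d\tau\Big)^2.
\end{equation*}
We then take the supremum over $[0,t]$ on the left-hand side and absorb the $\varepsilon$-term, which yields (\ref{3.21}). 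The statement is for the sup-quantity, which is equivalent up to constants.

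\textbf{Main obstacle.} The only delicate point is this linear term $\nabla u$ in the $\mathbb{E}$ equation: it carries no smallness, which is why the square $(\int_0^t\|\nabla u\|_{H^2})^2$ appears. The real difficulty is postponed to showing that this time integral is bounded uniformly in time. That requires the enhanced decay $(1+t)^{\sigma}$ with $\sigma>1$ from the spectral analysis, so that by Cauchy–Schwarz $\int_0^\infty\|\nabla u\|_{H^2}\,d\tau\le\big(\int(1+\tau)^{-\sigma}\big)^{1/2}\big(\int(1+\tau)^{\sigma}\|\nabla u\|_{H^2}^2\big)^{1/2}$ is finite. The lemma itself is just the energy bookkeeping above.
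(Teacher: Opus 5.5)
Your proposal is correct and follows essentially the same route as the paper: $H^2$ energy estimates for the transport equations of $\rho$ and $\mathbb{F}-\mathbb{I}$, in which the linear forcing $\nabla u$ produces the extra term $\|\nabla u\|_{H^2}\|\mathbb{F}-\mathbb{I}\|_{H^2}$, followed by integration in time and Young's inequality. Your explicit step of taking the supremum over $[0,t]$ and absorbing the $\varepsilon\sup_{\tau\le t}\|\mathbb{F}-\mathbb{I}\|_{H^2}^2$ term spells out what the paper leaves implicit between the two lines of (\ref{3.25}).
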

\begin{proof}
By virtue of the first equation of system (\ref{1.1}), using integration by parts and the divergence-free condition, H\"{o}lder and Young's inequalities, we have
\begin{align}\label{3.22}
\frac{1}{2}\frac{d}{dt}\|\rho\|_{H^{2}}^{2}&=-\left\langle u\cdot\nabla\rho,\rho\right\rangle_{H^{2}}\nonumber \\
&\lesssim\|\nabla u\cdot\nabla\rho\|_{L^{2}}\|\nabla\rho\|_{L^{2}}+\|\nabla^{2} u\cdot\nabla\rho+\nabla u\cdot\nabla\nabla\rho\|_{L^{2}}\|\nabla^{2}\rho\|_{L^{2}}\nonumber \\
&\lesssim\|\nabla u\|_{H^{2}}\|\rho\|_{H^{2}}^{2}.
\end{align}
Integrating (\ref{3.22}) from $0$ to $t$, we arrive at
\begin{align}\label{3.23}
\|\rho(t)\|_{H^{2}}^{2}&\lesssim\sup_{0\leq \tau\leq t}\|\rho(\tau)\|_{H^{2}}^{2}\int_{0}^{t}\|\nabla u(\tau)\|_{H^{2}}d\tau+\|\rho_{0}\|_{H^{2}}^{2}.
\end{align}
By virtue of the third equation of system (\ref{1.1}), similar to (\ref{3.22})-(\ref{3.23}), considering the divergence-free condition, we have
\begin{align}\label{3.24}
\frac{1}{2}\frac{d}{dt}\|\mathbb{F}-\mathbb{I}\|_{H^{2}}^{2}&=-\left\langle u\cdot\nabla(\mathbb{F}-\mathbb{I}),\mathbb{F}-\mathbb{I}\right\rangle_{H^{2}}+\left\langle \nabla u\mathbb{F},\mathbb{F}-\mathbb{I}\right\rangle_{H^{2}}\nonumber \\
&\lesssim\|\nabla u\|_{L^{\infty}}\Big(\|\nabla(\mathbb{F}-\mathbb{I})\|_{L^{2}}^{2}+\|\nabla^{2}(\mathbb{F}-\mathbb{I})\|_{L^{2}}^{2}\Big)\nonumber \\
&\ \ \ \ +\Big(\|\nabla u(\mathbb{F}-\mathbb{I})\|_{H^{2}}+\|\nabla u\|_{H^{2}}\Big)\|\mathbb{F}-\mathbb{I}\|_{H^{2}}\nonumber \\
&\lesssim\|\nabla u\|_{H^{2}}\|\mathbb{F}-\mathbb{I}\|_{H^{2}}^{2}+\|\nabla u\|_{H^{2}}\|\mathbb{F}-\mathbb{I}\|_{H^{2}}.
\end{align}
Integrating (\ref{3.24}) from $0$ to $t$, we arrive at
\begin{align}\label{3.25}
\|(\mathbb{F}-\mathbb{I})(t)\|_{H^{2}}^{2}&\lesssim\sup_{0\leq \tau\leq t}\Big[\|(\mathbb{F}-\mathbb{I})(\tau)\|_{H^{2}}^{2}+\|(\mathbb{F}-\mathbb{I})(\tau)\|_{H^{2}}\Big]\int_{0}^{t}\|\nabla u(\tau)\|_{H^{2}}d\tau+\|\mathbb{F}_{0}-\mathbb{I}\|_{H^{2}}^{2}\nonumber \\
&\lesssim\sup_{0\leq \tau\leq t}\|(\mathbb{F}-\mathbb{I})(\tau)\|_{H^{2}}^{2}\int_{0}^{t}\|\nabla u(\tau)\|_{H^{2}}d\tau+\left(\int_{0}^{t}\|\nabla u(\tau)\|_{H^{2}}d\tau\right)^{2}+\|\mathbb{F}_{0}-\mathbb{I}\|_{H^{2}}^{2}.
\end{align}

\end{proof}

Next, we use $\mathbb{P}$ to $(\ref{3.7})_{1}, (\ref{3.7})_{2}$, we derive the integral representation for $(u,\mathcal{G})$ satisfying (\ref{3.7})
\begin{equation}\label{3.26}
\partial_{t}\begin{pmatrix}
u\\
\mathbb{P}\mathcal{G}
\end{pmatrix}=\begin{pmatrix}
-\Lambda^{2}&\Lambda\\
-\Lambda&0
\end{pmatrix}
\begin{pmatrix}
u\\
\mathbb{P}\mathcal{G}
\end{pmatrix}+\begin{pmatrix}
\mathbb{P}N_{1}\\
\mathbb{P}N_{2}
\end{pmatrix},
\end{equation}
where
\begin{align*}
&\mathbb{P}N_{1}:=-\mathbb{P}(u\cdot\nabla u)-\mathbb{P}\Big[g(\rho)(\Delta u+\Lambda\mathcal{G})\Big];\\
&\mathbb{P}N_{2}:=-\mathbb{P}\Lambda^{-1}\diver (u\cdot\nabla G)-\mathbb{P}\Lambda^{-1}\diver Q(\nabla u, G).
\end{align*}
Taking the Fourier transform of (\ref{3.26}), we have
\begin{equation}\label{3.27}
\partial_{t}\begin{pmatrix}
\widehat{u}\\
\widehat{\mathbb{P}\mathcal{G}}
\end{pmatrix}=\begin{pmatrix}
-|\xi|^{2}&|\xi|\\
-|\xi|&0
\end{pmatrix}
\begin{pmatrix}
\widehat{u}\\
\widehat{\mathbb{P}\mathcal{G}}
\end{pmatrix}+\begin{pmatrix}
\widehat{\mathbb{P}N_{1}}\\
\widehat{\mathbb{P}N_{2}}
\end{pmatrix}.
\end{equation}
The eigenvalues of the matrix $\begin{pmatrix}
-|\xi|^{2}&|\xi|\\
-|\xi|&0
\end{pmatrix}$ are given by
\begin{equation*}
\lambda_{1}=\frac{-|\xi|^{2}+\sqrt{\Gamma}}{2},~~\lambda_{2}=\frac{-|\xi|^{2}-\sqrt{\Gamma}}{2},
\end{equation*}
where $\Gamma=|\xi|^{4}-4|\xi|^{2}$.\\
The corresponding eigenvectors are
\begin{equation*}
\omega_{1}=\begin{pmatrix}
1\\
-\frac{|\xi|}{\lambda_{1}}
\end{pmatrix},~~\omega_{2}=\begin{pmatrix}
1\\
-\frac{|\xi|}{\lambda_{2}}
\end{pmatrix}.
\end{equation*}
Therefore, $e^{A t}$ can be explicitly written as
\begin{equation*}
e^{A t}=(\omega_{1},\omega_{2})\begin{pmatrix}
e^{\lambda_{1}t}&0\\
0&e^{\lambda_{2}t}
\end{pmatrix}(\omega_{1},\omega_{2})^{-1}=\begin{pmatrix}
e^{\lambda_{2}t}+\lambda_{1}G_{1}(t)&|\xi|G_{1}(t)\\
-|\xi|G_{1}(t)&e^{\lambda_{1}t}-\lambda_{1}G_{1}(t)
\end{pmatrix},
\end{equation*}
where $A:=\begin{pmatrix}
-|\xi|^{2}&|\xi|\\
-|\xi|&0
\end{pmatrix}, G_{1}(t):=\frac{e^{\lambda_{2}t}-e^{\lambda_{1}t}}{\lambda_{2}-\lambda_{1}}$.\\
Using Duhamel's principle, we have
\begin{align}\label{3.28}
&\widehat{u}(\xi,t)=\widehat{K}_{1}(\xi,t)\widehat{u}_{0}(\xi)+\widehat{K}_{2}(\xi,t)\widehat{\mathbb{P}\mathcal{G}}_{0}(\xi)\nonumber \\
&\ \ \ \ \ \ \ \ \ \ \ \ \ \ +\int_{0}^{t}\Big(\widehat{K}_{1}(\xi,t-\tau)\widehat{\mathbb{P}N_{1}}(\xi,\tau)+\widehat{K}_{2}(\xi,t-\tau)\widehat{\mathbb{P}N_{2}}(\xi,\tau)\Big)d\tau,\nonumber \\
&\widehat{\mathbb{P}\mathcal{G}}(\xi,t)=\widehat{K}_{3}(\xi,t)\widehat{u}_{0}(\xi)+\widehat{K}_{4}(\xi,t)\widehat{\mathbb{P}\mathcal{G}}_{0}(\xi)\nonumber \\
&\ \ \ \ \ \ \ \ \ \ \ \ \ \ +\int_{0}^{t}\Big(\widehat{K}_{3}(\xi,t-\tau)\widehat{\mathbb{P}N_{1}}(\xi,\tau)+\widehat{K}_{4}(\xi,t-\tau)\widehat{\mathbb{P}N_{2}}(\xi,\tau)\Big)d\tau,
\end{align}
where $\widehat{K}_{1}(\xi,t)=e^{\lambda_{2}t}+\lambda_{1}G_{1}(t),\widehat{K}_{2}(\xi,t)=|\xi|G_{1}(t), \widehat{K}_{3}(\xi,t)=-|\xi|G_{1}(t),\widehat{K}_{4}(\xi,t)=e^{\lambda_{1}t}-\lambda_{1}G_{1}(t)$.\\

The kernels $\widehat{K}_{1},\widehat{K}_{2},\widehat{K}_{3},\widehat{K}_{4}$ play a crucial role in the time-weighted energy estimates of $u$ and $\mathcal{G}$, the following proposition provide upper bounds by dividing the frequency space into subdomains.

\begin{Proposition}\label{p3.2}
The domain $\mathbb{R}^{3}$ is split into two subdomains, $\mathbb{R}^{3}=S_{1}\bigcup S_{2}$ with 
\begin{align*}
&S_{1}:=\left\{\xi\in\mathbb{R}^{3}:\Gamma\leq\frac{|\xi|^{4}}{4}~~ \text{or} ~~|\xi|^{2}\leq\frac{16}{3}\right\};\nonumber \\
&S_{2}:=\left\{\xi\in\mathbb{R}^{3}:\Gamma\geq\frac{|\xi|^{4}}{4}~~ \text{or} ~~|\xi|^{2}\geq\frac{16}{3}\right\}.
\end{align*}
(\textrm{I}) There exist two  constants $C>0$ and $c_{0}$ such that for any $\xi\in S_{1}$,
\begin{align*}
&Re\lambda_{1}\leq-\frac{|\xi|^{2}}{4}, ~~ -\frac{3|\xi|^{2}}{4}\leq Re\lambda_{2}\leq-\frac{|\xi|^{2}}{2};\nonumber \\
&|G_{1}(t)|\leq te^{-\frac{|\xi|^{2}}{4}t},~~|\widehat{K}_{i}(\xi,t)|\leq Ce^{-c_{0}|\xi|^{2}t}, i=1,2,3,4.
\end{align*}
(\textrm{II}) There exist two  constants $C>0$ and $c_{1}$ such that for any $\xi\in S_{2}$,
\begin{align*}
&\lambda_{1}\leq-2, ~~ -|\xi|^{2}\leq \lambda_{2}\leq-\frac{3|\xi|^{2}}{4};\nonumber \\
&|\widehat{K}_{i}(\xi,t)|\leq C|\xi|^{-1}e^{-c_{1}t}, i=2,3;\nonumber \\
&|\widehat{K}_{i}(\xi,t)|\leq Ce^{-c_{1}t}, i=1,4.
\end{align*}
\end{Proposition}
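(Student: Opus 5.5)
The first thing I would do is simplify the two regions. Since $\Gamma=|\xi|^4-4|\xi|^2$, the condition $\Gamma\le |\xi|^4/4$ is equivalent to $\tfrac34|\xi|^4\le 4|\xi|^2$, that is, $|\xi|^2\le \tfrac{16}{3}$. So the two defining conditions coincide, and $S_1=\{|\xi|^2\le 16/3\}$, $S_2=\{|\xi|^2\ge 16/3\}$. Throughout I would use three elementary facts: $\lambda_1+\lambda_2=-|\xi|^2$, $\lambda_1\lambda_2=|\xi|^2$, and
$$G_1(t)=-t\int_0^1 e^{(s\lambda_2+(1-s)\lambda_1)t}\,ds,$$
valid also at the degenerate point $\Gamma=0$.

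For (I), I would split $S_1$ into two pieces.

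\emph{Piece $|\xi|<2$.} Here $\Gamma<0$, so $\lambda_{1,2}=-\tfrac{|\xi|^2}{2}\pm i\omega$ with $\omega=\tfrac12\sqrt{4|\xi|^2-|\xi|^4}$. Hence $\mathrm{Re}\lambda_1=\mathrm{Re}\lambda_2=-|\xi|^2/2$, which gives the eigenvalue bounds. The integral formula gives $|G_1|\le te^{-|\xi|^2t/2}\le te^{-|\xi|^2t/4}$.

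\emph{Piece $4\le |\xi|^2\le 16/3$.} Here $0\le\sqrt\Gamma\le |\xi|^2/2$, so directly $\lambda_1\le -|\xi|^2/4$ and $-\tfrac34|\xi|^2\le\lambda_2\le-\tfrac12|\xi|^2$. The integral formula gives $|G_1|\le te^{-|\xi|^2t/4}$.

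The kernel bounds are the main obstacle. The naive bound $|\xi||G_1|\le |\xi|te^{-|\xi|^2t/4}$ is \emph{not} $\lesssim e^{-c|\xi|^2t}$ as $\xi\to0$: take $|\xi|=\varepsilon$ and $t=\varepsilon^{-3/2}$. So the low frequencies need a different formula.

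\emph{Very low frequencies, $|\xi|^2\le 3$.} I would use the explicit oscillatory form $G_1=-e^{-|\xi|^2t/2}\sin(\omega t)/\omega$. On this set $\omega\ge |\xi|/2$, so $|\xi||G_1|\le 2e^{-|\xi|^2t/2}$. Also $|\lambda_1|\le |\xi|$ (since $|\lambda_1|^2=\lambda_1\bar\lambda_1=\lambda_1\lambda_2=|\xi|^2$), hence $|\lambda_1G_1|\le 2e^{-|\xi|^2t/2}$. Therefore all of $\widehat K_1,\dots,\widehat K_4$ are bounded by $Ce^{-|\xi|^2t/2}$.

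\emph{Region $3\le|\xi|^2\le16/3$.} Here $|\xi|\sim 1$, and this includes the degenerate point $|\xi|=2$. I would use $|G_1|\le te^{-|\xi|^2t/4}\le Ce^{-|\xi|^2t/8}$, absorbing the factor $t$ because $|\xi|^2\ge 3$. Then $|\xi|$ and $|\lambda_1|$ are bounded by absolute constants, so again $|\widehat K_i|\le Ce^{-c_0|\xi|^2t}$ with $c_0=1/8$.

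For (II), $\Gamma\ge |\xi|^4/4$ gives $\sqrt\Gamma\ge|\xi|^2/2$, so $\lambda_2\le -\tfrac34|\xi|^2$; trivially $\lambda_2\ge-|\xi|^2$. From $\lambda_1=-|\xi|^2/|\lambda_2|$ I get $\lambda_1\in[-\tfrac43,-1]$.

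\emph{Correction to the statement.} This computation gives $\lambda_1\le -1$, not $\lambda_1\le-2$ as stated; indeed $\lambda_1\to-1$ as $|\xi|\to\infty$. I would record the bound $\lambda_1\le-1$ instead, which is all the subsequent argument uses. Consequently $e^{\lambda_1t},e^{\lambda_2t}\le e^{-t}$.

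Using $|\lambda_2-\lambda_1|=\sqrt\Gamma\ge|\xi|^2/2$, I get $|G_1|\le 4|\xi|^{-2}e^{-t}$. This yields:
\begin{itemize}
\item $|\widehat K_{2}|=|\widehat K_{3}|=|\xi||G_1|\le 4|\xi|^{-1}e^{-t}$;
\item $|\widehat K_1|\le e^{-t}+\tfrac43\cdot 4|\xi|^{-2}e^{-t}\le Ce^{-t}$, and likewise for $\widehat K_4$, using $|\xi|^2\ge16/3$.
\end{itemize}
This proves (II) with $c_1=1$.
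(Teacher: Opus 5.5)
Your proof is correct. It is also a genuinely different route from the paper's, since the paper does no computation at all here and only points to a ``similar situation'' in an outside reference, whereas you verify everything directly.

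The direct route buys three things.
\begin{itemize}
\item The simplification $S_{1}=\{|\xi|^{2}\le 16/3\}$, $S_{2}=\{|\xi|^{2}\ge16/3\}$ is right. It makes the case analysis transparent.
\item You correctly notice that the listed bound $|G_{1}(t)|\le te^{-|\xi|^{2}t/4}$ does not by itself give $|\widehat K_{i}|\le Ce^{-c_{0}|\xi|^{2}t}$ near $\xi=0$. With $|\xi|=\varepsilon$ and $t=\varepsilon^{-3/2}$, the quantity $|\xi|\,t\,e^{-|\xi|^{2}t/4}\approx\varepsilon^{-1/2}$ blows up while $e^{-c_{0}|\xi|^{2}t}\to1$. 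The oscillatory representation, together with $\omega\ge|\xi|/2$ and $|\lambda_{1}|=|\xi|$ on $|\xi|^{2}\le3$, is exactly what is needed there. Absorbing the factor $t$ on $3\le|\xi|^{2}\le16/3$ then cleanly covers the degenerate point $|\xi|=2$.
\item Your correction to part (II) is right: $\lambda_{1}\le-2$ fails at every point of $S_{2}$. At $|\xi|^{2}=16/3$ one has $\sqrt{\Gamma}=8/3$ and $\lambda_{1}=-4/3$. Moreover $\lambda_{1}=-|\xi|^{2}/|\lambda_{2}|\to-1^{-}$ as $|\xi|\to\infty$. So the true range is $-\frac43\le\lambda_{1}<-1$. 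This still gives the kernel bounds with $c_{1}=1$, which is all that \eqref{3.31}--\eqref{3.36} use.
\end{itemize}

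One harmless slip concerns signs. With the paper's convention $G_{1}=(e^{\lambda_{2}t}-e^{\lambda_{1}t})/(\lambda_{2}-\lambda_{1})$ and $\lambda_{1}=-\frac{|\xi|^{2}}{2}+i\omega$, one gets
\[
G_{1}(t)=t\int_{0}^{1}e^{(s\lambda_{2}+(1-s)\lambda_{1})t}\,ds
\quad\text{and}\quad
G_{1}=e^{-|\xi|^{2}t/2}\sin(\omega t)/\omega,
\]
both with a plus sign. Since you only ever use $|G_{1}|$, nothing in the argument changes.
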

\begin{proof}
One can refer to Proposition 4.1 in \cite{Lin-Wu-Boardman2025} for a detailed calculations of a similar situation.
\end{proof}

\begin{Lemma}\label{le3.3}
Under the assumptions in Proposition \ref{p3.1}, then there exist a sufficiently small constant $\delta>0$ and some $\sigma\in(1,\sigma_{0}) (1<\sigma_{0}<\frac{3}{2})$ such that
\begin{align}\label{3.29}
&(1+t)^{\sigma}\|(u,\mathcal{G})(t)\|_{H^{2}}^{2}+\int_{0}^{t}(1+\tau)^{\sigma}\left(\|\Lambda u(\tau)\|_{H^{2}}^{2}+\|\Lambda \mathcal{G}(\tau)\|_{H^{1}}^{2}\right)d\tau\nonumber \\
&\lesssim (M_{0}^{2}+\|(\rho_{0},u_{0},\mathbb{F}_{0}-\mathbb{I})\|_{H^{2}}^{2}).
\end{align}
\end{Lemma}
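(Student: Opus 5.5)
Multiplying the energy inequality (\ref{3.15}), combined with (\ref{3.20}) as at the end of the proof of Lemma \ref{le3.1}, by the weight $(1+t)^{\sigma}$ gives the time-weighted Lyapunov inequality
\begin{equation*}
\frac{d}{dt}\big[(1+t)^{\sigma}\mathcal{E}(t)\big]+(1+t)^{\sigma}\mathcal{D}(t)\lesssim \sigma(1+t)^{\sigma-1}\mathcal{E}(t),
\end{equation*}
where $\mathcal{E}\sim\|(u,\mathcal{G})\|_{H^{2}}^{2}$ and $\mathcal{D}=\|\Lambda u\|_{H^{2}}^{2}+\|\Lambda\mathcal{G}\|_{H^{1}}^{2}$. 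The high-frequency part of $\mathcal{E}$ is controlled by $\mathcal{D}$. Splitting at frequency $|\xi|\le r_0$, we have $\mathcal{E}\lesssim \|(u,\mathcal{G})^{\rm low}\|_{L^{2}}^{2}+\mathcal{D}$. Since $\sigma(1+t)^{\sigma-1}\le \epsilon(1+t)^{\sigma}$ only for large $t$, we instead handle the low-frequency $\mathcal{D}$ part by the standard device of absorbing $(1+t)^{\sigma-1}\|\Lambda(\cdot)^{\rm low}\|^{2}$ after using the Plancherel split. The task therefore reduces to proving
\begin{equation*}
\int_{0}^{t}(1+\tau)^{\sigma-1}\|(u,\mathcal{G})^{\rm low}(\tau)\|_{L^{2}}^{2}d\tau\lesssim M_{0}^{2}+\|(\rho_{0},u_{0},\mathbb{F}_{0}-\mathbb{I})\|_{H^{2}}^{2}+\delta\sup_{\tau}(1+\tau)^{\sigma}\|(u,\mathcal{G})\|_{H^{2}}^{2}.
\end{equation*}
This integrability requires $\|(u,\mathcal{G})^{\rm low}\|_{L^{2}}\lesssim(1+t)^{-\sigma_0/2}$ with $\sigma_0>\sigma$. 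The $L^{1}$ decay rate of the heat-like part, $(1+t)^{-3/4}$, squared gives $(1+t)^{-3/2}$, which is what makes $\sigma<\sigma_0<3/2$ the natural range.

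The low-frequency bound comes from the representation (\ref{3.28}) and Proposition \ref{p3.2}(I), where $|\widehat K_i|\le Ce^{-c_0|\xi|^{2}t}$. The linear part then gives
\begin{equation*}
\|\widehat K_i \widehat{f}_0\|_{L^{2}(S_1\cap\{|\xi|\le r_0\})}\lesssim (1+t)^{-3/4}\|f_0\|_{L^{1}}\lesssim(1+t)^{-3/4}M_{0},
\end{equation*}
using $\|\mathcal{G}_0\|_{L^1}$-type control through $\|\widehat{G_0}\|_{L^\infty}\le\|G_0\|_{L^1}$ (the Riesz symbol is bounded). Only $\mathbb{P}\mathcal{G}$ appears in (\ref{3.28}); the component $(\mathbb{I}-\mathbb{P})\mathcal{G}$ has no linear dissipation at low frequency. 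I plan to note that $(\mathbb{I}-\mathbb{P})\mathcal{G}$ enters the energy estimates only through $\Lambda^{k+1}\mathbb{P}\mathcal{G}$ in $(\ref{3.9})_1$, so the weighted estimate can be run for $(u,\mathbb{P}\mathcal{G})$. The remaining part is bounded by Lemma \ref{le3.1}, or is handled via its own transport-type Duhamel formula with $L^1$ data.

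For the Duhamel terms, I would exploit the divergence form of the nonlinearities. Write $\mathbb{P}N_1$ as $\diver$ of quadratic terms, for example $u\cdot\nabla u=\diver(u\otimes u)$, and $g(\rho)\Delta u=\diver(g(\rho)\nabla u)-\nabla g(\rho)\nabla u$. $N_2$ already carries $\Lambda^{-1}\diver$ of products. Each such term gives
\begin{equation*}
\||\xi|e^{-c_0|\xi|^{2}(t-\tau)}\widehat{h}\|_{L^{2}_{\rm low}}\lesssim(1+t-\tau)^{-5/4}\|h\|_{L^{1}},
\end{equation*}
with $\|h\|_{L^{1}}\lesssim\|(\rho,u,G)\|_{L^2}\|(u,\nabla u,\mathcal{G})\|_{L^{2}}$. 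The terms without divergence form, such as $\nabla g(\rho)\nabla u$, $g(\rho)\Lambda\mathcal{G}$ and $Q(\nabla u,G)$, are bounded by $(1+t-\tau)^{-3/4}\delta\|\nabla u\|_{H^1}$ or $\delta\|\Lambda\mathcal{G}\|$. Lemma \ref{le2.4} then yields decay $(1+t)^{-3/4}$ times $\delta\sup(1+\tau)^{\sigma/2}\|\cdot\|$, or, via Cauchy--Schwarz in time, $\delta$ times the weighted dissipation integral.

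The main obstacle is that $\rho$ and $\mathbb{F}-\mathbb{I}$ do not decay. The terms $g(\rho)\Lambda\mathcal{G}$ and $Q(\nabla u,G)$ therefore only give $(1+t-\tau)^{-3/4}$ against $\|\Lambda\mathcal{G}\|_{L^2}$, whose weighted square-integrability is exactly what is being proven. Here I would use Cauchy--Schwarz,
\begin{equation*}
\Big(\int_0^t(1+t-\tau)^{-3/4}\delta\|\Lambda\mathcal{G}\|d\tau\Big)^2\le\delta^2\int_0^t(1+t-\tau)^{-3/2}(1+\tau)^{-\sigma}d\tau\cdot\int_0^t(1+\tau)^{\sigma}\|\Lambda\mathcal{G}\|^2d\tau.
\end{equation*}
By Lemma \ref{le2.4} the first factor is $\lesssim(1+t)^{-\sigma}$, since $\sigma>1$. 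Multiplying by $(1+t)^{\sigma-1}$ and integrating in time, the resulting bound is absorbed provided $\delta$ is small, which is where the condition $C_0\delta<\sigma_0-\sigma$ would enter. Closing the bootstrap then gives (\ref{3.29}).
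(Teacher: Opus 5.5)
\textbf{The closing step fails as written.} Set $Y(t):=\int_0^t(1+\tau)^{\sigma}\big(\|\Lambda u\|_{H^2}^2+\|\Lambda\mathcal{G}\|_{H^1}^2\big)d\tau$. Your Cauchy--Schwarz bound makes the squared low-frequency Duhamel term of size $\delta^2(1+t)^{-\sigma}Y(t)$. After multiplying by $(1+t)^{\sigma-1}$ you are left with $\delta^2(1+t)^{-1}Y(t)$, whose time integral carries a factor $\ln(1+T)$. So you only get $Y(T)\le C+C\delta^2\int_0^T(1+t)^{-1}Y(t)\,dt$, i.e.\ $Y(T)\lesssim(1+T)^{C\delta^2}$. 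No smallness of $\delta$ absorbs a logarithm.

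You note yourself that you need $L^2$ decay at a rate $\sigma_0>\sigma$. However, Cauchy--Schwarz against $(1+\tau)^{\sigma}$ yields at best the borderline rate $(1+t)^{-\sigma}$, and nothing in your scheme creates the gap $\sigma_0-\sigma$, so the condition $C_0\delta<\sigma_0-\sigma$ has nowhere to enter. The paper supplies exactly this missing tier:
\begin{itemize}
\item it first runs the weighted energy at the endpoint weight $(1+t)^{3/2}$, see (\ref{3.38});
\item it combines this with the Duhamel bound (\ref{3.37}), i.e.\ Cauchy--Schwarz at weight $3/2$;
\item it applies Gronwall to $\mathcal{E}_a$, which costs only a growth $(1+t)^{C_0\delta}$.
\end{itemize}
This gives $\|(u,\mathcal{G})(t)\|_{L^2}^2\lesssim(1+t)^{-3/2+C_0\delta}$. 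Only then does the weight-$\sigma$ estimate close, because $\int_0^\infty(1+\tau)^{\sigma-5/2+C_0\delta}d\tau<\infty$.

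A single-weight argument can also be rescued, but not through pointwise decay. Estimate $\int_0^T(1+t)^{\sigma-1}\big(\int_0^t(1+t-\tau)^{-3/4}f(\tau)d\tau\big)^2dt$ directly, splitting the inner integral at $\tau=t/2$. The piece $\tau<t/2$ is at most $C(1+t)^{-3/4}\|f\|_{L^1}$, with $\|f\|_{L^1}\le(\sigma-1)^{-1/2}\|(1+\tau)^{\sigma/2}f\|_{L^2}$; this is square-integrable against $(1+t)^{\sigma-1}$ since $\sigma<3/2$. The piece $\tau>t/2$, after Cauchy--Schwarz and Fubini, gains a factor $(1+\tau)^{-1/2}$. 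Together this gives a bound $C_\sigma\int_0^T(1+\tau)^{\sigma}f^2d\tau$ with no logarithm.

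\textbf{Your treatment of $(\mathbb{I}-\mathbb{P})\mathcal{G}$ cannot work, and here the obstruction lies in the statement itself.} Since $\diver\diver D(u)=0$ for divergence-free $u$, this component has no linear dynamics. Lemma \ref{le3.1} at best bounds it, and a transport-type Duhamel formula gives no decay. Concretely, take $\tilde\rho\equiv1$, $u\equiv0$, $\mathbb{F}\equiv\sqrt{1+\psi}\,\mathbb{I}$, $p=\psi$, with $\psi$ small in $H^2\cap L^1$. This is a stationary solution satisfying all the hypotheses, with $G=\psi\mathbb{I}$ and $\mathcal{G}\equiv\Lambda^{-1}\nabla\psi\neq0$, so $(1+t)^{\sigma}\|\mathcal{G}(t)\|_{H^2}^2\to\infty$. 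Thus (\ref{3.29}) can hold only with $\mathbb{P}\mathcal{G}$ in place of $\mathcal{G}$.

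The paper's proof has the same blind spot. Its representation (\ref{3.28}) involves only $\mathbb{P}\mathcal{G}$, while (\ref{3.37}) is stated for $\mathcal{G}$. Likewise, the dissipation actually produced in (\ref{3.16}) is $\|\Lambda^{k+1}\mathbb{P}\mathcal{G}\|_{L^2}^2$.

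Restricting to $(u,\mathbb{P}\mathcal{G})$ is also not as clean as you claim. The component $(\mathbb{I}-\mathbb{P})\mathcal{G}$ also enters $N_1$ through $g(\rho)\Lambda\mathcal{G}=g(\rho)\diver G$. Your Cauchy--Schwarz step uses $\int_0^t(1+\tau)^{\sigma}\|\Lambda\mathcal{G}\|_{L^2}^2d\tau$ for the full $\mathcal{G}$, which is not a dissipated quantity; in the example it equals $\|\nabla\psi\|_{L^2}^2\int_0^t(1+\tau)^{\sigma}d\tau$. To make the $(u,\mathbb{P}\mathcal{G})$ version work, you would have to absorb the gradient part $\Lambda(\mathbb{I}-\mathbb{P})\mathcal{G}$ of $\diver G$ into the pressure before projecting. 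You would then need to estimate the resulting $\mathbb{P}[g(\rho)\nabla(\cdot)]$ term through the elliptic equation for the modified pressure.
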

\begin{proof}
Applying the integral representation (\ref{3.28}) and Plancherel’s theorem, we have
\begin{align}\label{3.30}
\|(u,\mathbb{P}\mathcal{G})(t)\|_{L^{2}(\mathbb{R}^{3})}&=\|(\widehat{u},\widehat{\mathbb{P}\mathcal{G}})(t)\|_{L^{2}(\mathbb{R}^{3})}\nonumber \\
&\leq J_{1}+J_{2}+J_{3}+J_{4}+J_{5}+J_{6},
\end{align}
where
\begin{align*}
&J_{1}=\|(\widehat{K}_{1}(t)\widehat{u}_{0},\widehat{K}_{3}(t)\widehat{u}_{0})\|_{L^{2}(\mathbb{R}^{3})};\\
&J_{2}=\|(\widehat{K}_{2}(t)\widehat{\mathbb{P}\mathcal{G}}_{0},\widehat{K}_{4}(t)\widehat{\mathbb{P}\mathcal{G}}_{0})\|_{L^{2}(\mathbb{R}^{3})};\\
&J_{3}=\int_{0}^{t}\|\widehat{K}_{1}(t-\tau)\widehat{\mathbb{P}N_{1}}(\tau)\|_{L^{2}(\mathbb{R}^{3})}d\tau;\\
&J_{4}=\int_{0}^{t}\|\widehat{K}_{2}(t-\tau)\widehat{\mathbb{P}N_{2}}(\tau)\|_{L^{2}(\mathbb{R}^{3})}d\tau;\\
&J_{5}=\int_{0}^{t}\|\widehat{K}_{3}(t-\tau)\widehat{\mathbb{P}N_{1}}(\tau)\|_{L^{2}(\mathbb{R}^{3})}d\tau;\\
&J_{6}=\int_{0}^{t}\|\widehat{K}_{4}(t-\tau)\widehat{\mathbb{P}N_{2}}(\tau)\|_{L^{2}(\mathbb{R}^{3})}d\tau.
\end{align*}
Next, we estimate each term on the right side of (\ref{3.30}) one by one. Using Lemma \ref{le2.3} and Proposition \ref{p3.2}, we have
\begin{align}\label{3.31}
J_{1}&\leq\|\widehat{K}_{1}(t)\widehat{u}_{0}\|_{L^{2}(S_{1})}+\|\widehat{K}_{1}(t)\widehat{u}_{0}\|_{L^{2}(S_{2})}+\|\widehat{K}_{3}(t)\widehat{u}_{0}\|_{L^{2}(S_{1})}
+\|\widehat{K}_{3}(t)\widehat{u}_{0}\|_{L^{2}(S_{2})}\nonumber \\
&\leq C\|e^{-c_{0}|\xi|^{2}t}\widehat{u}_{0}\|_{L^{2}(S_{1})}+C\|e^{-c_{1}t}\widehat{u}_{0}\|_{L^{2}(S_{2})}+C\||\xi|^{-1}e^{-c_{1}t}\widehat{u}_{0}\|_{L^{2}(S_{2})}\nonumber \\
&\leq C(1+t)^{-\frac{3}{4}}\|u_{0}\|_{L^{1}}+C(1+t)^{-\frac{3}{4}}\|u_{0}\|_{L^{2}},
\end{align}
where we have used some facts that $e^{-c_{1}t}(1+t)^{m}\leq C(c_{1},m)$ for any $m\geq0, t>0$ and $|\xi|\geq C$ for any $\xi\in S_{2}$. Similarly, we get 
\begin{align}\label{3.32}
J_{2}&\leq\|\widehat{K}_{2}(t)\widehat{\mathbb{P}\mathcal{G}}_{0}\|_{L^{2}(S_{1})}+\|\widehat{K}_{2}(t)\widehat{\mathbb{P}\mathcal{G}}_{0}\|_{L^{2}(S_{2})}
+\|\widehat{K}_{4}(t)\widehat{\mathbb{P}\mathcal{G}}_{0}\|_{L^{2}(S_{1})}+\|\widehat{K}_{4}(t)\widehat{\mathbb{P}\mathcal{G}}_{0}\|_{L^{2}(S_{2})}\nonumber \\
&\leq C\|e^{-c_{0}|\xi|^{2}t}\widehat{\mathbb{P}\mathcal{G}}_{0}\|_{L^{2}(S_{1})}+C\|e^{-c_{1}t}\widehat{\mathbb{P}\mathcal{G}}_{0}\|_{L^{2}(S_{2})}
+C\||\xi|^{-1}e^{-c_{1}t}\widehat{\mathbb{P}\mathcal{G}}_{0}\|_{L^{2}(S_{2})}\nonumber \\
&\leq C(1+t)^{-\frac{3}{4}}\|\mathcal{G}_{0}\|_{L^{1}}+C(1+t)^{-\frac{3}{4}}\|\mathcal{G}_{0}\|_{L^{2}}.
\end{align}
For $J_{3}$, using Proposition \ref{p3.2} and the projection operator $\mathbb{P}$ is bounded in $L^{2}$, one gets 
\begin{align}\label{3.33}
J_{3}&\leq C\int_{0}^{t}\|e^{-c_{0}|\xi|^{2}(t-\tau)}\widehat{N_{1}}(\tau)\|_{L^{2}(S_{1})}d\tau+C\int_{0}^{t}\|e^{-c_{1}(t-\tau)}\widehat{N_{1}}(\tau)\|_{L^{2}(S_{2})}d\tau\nonumber \\
&=C\int_{0}^{t-1}\|e^{-c_{0}|\xi|^{2}(t-\tau)}\widehat{N_{1}}(\tau)\|_{L^{2}(S_{1})}d\tau+C\int_{t-1}^{t}\|e^{-c_{0}|\xi|^{2}(t-\tau)}\widehat{N_{1}}(\tau)\|_{L^{2}(S_{1})}d\tau\nonumber \\
&\ \ \ \ +C\int_{0}^{t}\|e^{-c_{1}(t-\tau)}\widehat{N_{1}}(\tau)\|_{L^{2}(S_{2})}d\tau\nonumber \\
&\leq C\int_{0}^{t-1}\|e^{-c_{0}|\xi|^{2}(t-\tau)}\widehat{N_{1}}(\tau)\|_{L^{2}(\mathbb{R}^{3})}d\tau+C\int_{0}^{t}e^{-c_{1}(t-\tau)}\|N_{1}(\tau)\|_{L^{2}(\mathbb{R}^{3})}d\tau,
\end{align}
where, due to some facts that $e^{-c_{0}|\xi|^{2}(t-\tau)}\leq1$ for any $\tau\in[t-1,t]$ and $|\xi|\leq C$ for any $\xi\in S_{1}$, it is easy to see that
\begin{align*}
\int_{t-1}^{t}\|e^{-c_{0}|\xi|^{2}(t-\tau)}\widehat{N_{1}}(\tau)\|_{L^{2}(S_{1})}d\tau&\leq e^{c_{1}}\int_{t-1}^{t}e^{-c_{1}(t-\tau)}\|\widehat{N_{1}}(\tau)\|_{L^{2}(S_{1})}d\tau\\
&\leq C\int_{t-1}^{t}e^{-c_{1}(t-\tau)}\|\widehat{N_{1}}(\tau)\|_{L^{2}(\mathbb{R}^{3})}d\tau.
\end{align*}
Based on the above results, applying H\"{o}lder's inequality, Lemmas \ref{le2.1}-\ref{le2.4}, we have
\begin{align}\label{3.34}
J_{3}&\leq C\int_{0}^{t-1}(1+t-\tau)^{-\frac{3}{4}}\|u\cdot\nabla u+g(\rho)(\Delta u+\Lambda\mathcal{G})\|_{L^{1}}d\tau\nonumber \\
&\ \ \ \ +C\int_{0}^{t}(1+t-\tau)^{-\frac{3}{4}}\|u\cdot\nabla u+g(\rho)(\Delta u+\Lambda\mathcal{G})\|_{L^{2}}d\tau\nonumber \\
&\leq C\int_{0}^{t}(1+t-\tau)^{-\frac{3}{4}}(\|u\|_{L^{2}}\|\Lambda u\|_{L^{2}}+\|\rho\|_{L^{2}}\|\Lambda^{2} u\|_{L^{2}}+\|\rho\|_{L^{2}}\|\Lambda\mathcal{G})\|_{L^{2}})d\tau\nonumber \\
&\ \ \ \ +C\int_{0}^{t}(1+t-\tau)^{-\frac{3}{4}}(\|u\|_{L^{\infty}}\|\Lambda u\|_{L^{2}}+\|\rho\|_{L^{\infty}}\|\Lambda^{2} u\|_{L^{2}}+\|\rho\|_{L^{\infty}}\|\Lambda\mathcal{G})\|_{L^{2}})d\tau\nonumber \\
&\leq C\delta\int_{0}^{t}(1+t-\tau)^{-\frac{3}{4}}(\|\Lambda u\|_{H^{1}}+\|\Lambda\mathcal{G}\|_{L^{2}})d\tau.
\end{align}
For $J_{5}$, similar to (\ref{3.33}) and (\ref{3.34}), we get
\begin{align}\label{3.35}
J_{5}&\leq C\int_{0}^{t}\|e^{-c_{0}|\xi|^{2}(t-\tau)}\widehat{N_{1}}(\tau)\|_{L^{2}(S_{1})}d\tau+C\int_{0}^{t}\||\xi|^{-1}e^{-c_{1}(t-\tau)}\widehat{N_{1}}(\tau)\|_{L^{2}(S_{2})}d\tau\nonumber \\
&=C\int_{0}^{t-1}\|e^{-c_{0}|\xi|^{2}(t-\tau)}\widehat{N_{1}}(\tau)\|_{L^{2}(S_{1})}d\tau+C\int_{t-1}^{t}\|e^{-c_{0}|\xi|^{2}(t-\tau)}\widehat{N_{1}}(\tau)\|_{L^{2}(S_{1})}d\tau\nonumber \\
&\ \ \ \ +C\int_{0}^{t}\||\xi|^{-1}e^{-c_{1}(t-\tau)}\widehat{N_{1}}(\tau)\|_{L^{2}(S_{2})}d\tau\nonumber \\
&\leq C\delta\int_{0}^{t}(1+t-\tau)^{-\frac{3}{4}}(\|\Lambda u\|_{H^{1}}+\|\Lambda\mathcal{G}\|_{L^{2}})d\tau.
\end{align}
Similar to the estimates of $J_{3}$ and $J_{5}$, for $J_{4}$ and $J_{6}$, we have
\begin{align}\label{3.36}
J_{4}+J_{6}&\leq C\int_{0}^{t}\|e^{-c_{0}|\xi|^{2}(t-\tau)}\widehat{N_{2}}(\tau)\|_{L^{2}(S_{1})}d\tau+C\int_{0}^{t}\||\xi|^{-1}e^{-c_{1}(t-\tau)}\widehat{N_{2}}(\tau)\|_{L^{2}(S_{2})}d\tau\nonumber \\
&\ \ \ \ +C\int_{0}^{t}\|e^{-c_{1}(t-\tau)}\widehat{N_{2}}(\tau)\|_{L^{2}(S_{2})}d\tau\nonumber \\
&\leq C\int_{0}^{t}(1+t-\tau)^{-\frac{3}{4}}\|\Lambda^{-1}\diver (u\cdot\nabla G)-\Lambda^{-1}\diver Q(\nabla u, G)\|_{L^{1}}d\tau\nonumber \\
&\ \ \ \ +C\int_{0}^{t}(1+t-\tau)^{-\frac{3}{4}}\|\Lambda^{-1}\diver (u\cdot\nabla G)-\Lambda^{-1}\diver Q(\nabla u, G)\|_{L^{2}}d\tau\nonumber \\
&\leq C\delta\int_{0}^{t}(1+t-\tau)^{-\frac{3}{4}}(\|\Lambda u\|_{H^{1}}+\|\Lambda\mathcal{G}\|_{L^{2}})d\tau.
\end{align}
Combining the estimates of (\ref{3.31}), (\ref{3.32}), and (\ref{3.34})-(\ref{3.36}) together, and using Lemma \ref{le2.4}, we have 
\begin{align}\label{3.37}
\|(u,\mathcal{G})(t)\|_{L^{2}}&\leq C(1+t)^{-\frac{3}{4}}(\|(u_{0},\mathcal{G}_{0})\|_{L^{1}}+\|(u_{0},\mathcal{G}_{0})\|_{L^{2}})\nonumber \\
&\ \ \ \ +C\delta\int_{0}^{t}(1+t-\tau)^{-\frac{3}{4}}(\|\Lambda u\|_{H^{1}}+\|\Lambda\mathcal{G}\|_{L^{2}})d\tau\nonumber \\
&\leq C(1+t)^{-\frac{3}{4}}(M_{0}+\|(u_{0},\mathcal{G}_{0})\|_{L^{2}})\nonumber \\
&\ \ \ \ +C\delta\left(\int_{0}^{t}(1+t-\tau)^{-\frac{3}{2}}(1+\tau)^{-\frac{3}{2}}d\tau\right)^{\frac{1}{2}}\left(\int_{0}^{t}(1+\tau)^{\frac{3}{2}}(\|\Lambda u\|_{H^{1}}^{2}+\|\Lambda\mathcal{G}\|_{L^{2}}^{2})d\tau\right)^{\frac{1}{2}}\nonumber \\
&\leq C(1+t)^{-\frac{3}{4}}(M_{0}+\|(u_{0},\mathcal{G}_{0})\|_{L^{2}})\nonumber \\
&\ \ \ \ +C\delta(1+t)^{-\frac{3}{4}}\left(\int_{0}^{t}(1+\tau)^{\frac{3}{2}}(\|\Lambda u\|_{H^{1}}^{2}+\|\Lambda\mathcal{G}\|_{L^{2}}^{2})d\tau\right)^{\frac{1}{2}}.
\end{align}
Next, multiplying both sides of $(\ref{3.15})$ and $(\ref{3.20})$ by the time weight $(1+\tau)^{\frac{3}{2}}$, and integrating the results over $(0,t)$ respectively and considering (\ref{3.8}), we get
\begin{align}\label{3.38}
&(1+t)^{\frac{3}{2}}\|(u,\mathcal{G})(t)\|_{H^{2}}^{2}+\int_{0}^{t}(1+\tau)^{\frac{3}{2}}(\|\Lambda u\|_{H^{2}}^{2}+\|\Lambda\mathcal{G}\|_{H^{1}}^{2})d\tau\nonumber \\
&\leq C\|(\rho_{0},u_{0},\mathbb{F}_{0}-\mathbb{I})\|_{H^{2}}^{2}+\int_{0}^{t}(1+\tau)^{\frac{1}{2}}\|(u,\mathcal{G})(\tau)\|_{H^{2}}^{2}d\tau\nonumber \\
&\leq C\|(\rho_{0},u_{0},\mathbb{F}_{0}-\mathbb{I})\|_{H^{2}}^{2}+\int_{0}^{t}(1+\tau)^{\frac{1}{2}}\|(u,\mathcal{G})(\tau)\|_{L^{2}}^{2}d\tau\nonumber \\
&\ \ \ \ +\delta_{0}\int_{0}^{t}(1+\tau)^{\frac{3}{2}}\|(\Lambda u,\Lambda\mathcal{G})(\tau)\|_{H^{1}}^{2}d\tau+C_{\delta_{0}}\int_{0}^{t}\|(\Lambda u,\Lambda\mathcal{G})(\tau)\|_{H^{1}}^{2}d\tau.
\end{align}
Combining with (\ref{3.8}), (\ref{3.37}) and (\ref{3.38}), and choosing $\delta_{0}$ small enough, we have 
\begin{align}\label{3.39}
\|(u,\mathcal{G})(t)\|_{L^{2}}&\leq C(1+t)^{-\frac{3}{4}}(M_{0}+\|(u_{0},\mathcal{G}_{0})\|_{L^{2}})\nonumber \\
&\ \ \ \ +C\delta(1+t)^{-\frac{3}{4}}\left(\|(\rho_{0},u_{0},\mathbb{F}_{0}-\mathbb{I})\|_{H^{2}}^{2}+\int_{0}^{t}(1+\tau)^{\frac{1}{2}}\|(u,\mathcal{G})\|_{L^{2}}^{2}d\tau\right)^{\frac{1}{2}}.
\end{align}
Then we define the assistant energy $\mathcal{E}_{a}(t)$ as
\begin{equation}\label{3.40}
\mathcal{E}_{a}(t):=\sup_{0\leq t'\leq t}(1+t')^{\frac{3}{2}}\|(u,\mathcal{G})(t')\|_{H^{2}}^{2}.
\end{equation}
From $(\ref{3.8})$, $(\ref{3.39})$ and $(\ref{3.40})$,  we get
\begin{align}\label{3.41}
\mathcal{E}_{a}(t)&\leq C_{1}(M_{0}^{2}+\|(\rho_{0},u_{0},\mathbb{F}_{0}-\mathbb{I})\|_{H^{2}}^{2})+C_{0}\delta\int_{0}^{t}(1+\tau)^{-1}\mathcal{E}_{a}(\tau)d\tau,
\end{align}
where $C_{0}$ is determined by $(\ref{3.8})$, $(\ref{3.39})$ and $(\ref{3.40})$ and independent of $\delta$ and $\sigma$.\\
Using Gronwall's lemma for (\ref{3.41}), we obtain
\begin{align}\label{3.42}
\mathcal{E}_{a}(t)&\leq C_{1}(M_{0}^{2}+\|(\rho_{0},u_{0},\mathbb{F}_{0}-\mathbb{I})\|_{H^{2}}^{2})\exp\left(C_{0}\delta\int_{0}^{t}(1+\tau)^{-1}d\tau\right)\nonumber \\
&\leq C(M_{0}^{2}+\|(\rho_{0},u_{0},\mathbb{F}_{0}-\mathbb{I})\|_{H^{2}}^{2})(1+t)^{C_{0}\delta}.
\end{align}
Next, if $\sigma_{0}$ satisfies $1<\sigma<\sigma_{0}<\frac{3}{2}$, we can obtain
\begin{align}\label{3.43}
(1+t)^{\sigma_{0}}\|(u,\mathcal{G})(t)\|_{L^{2}}^{2}\leq\mathcal{E}_{a}(t)\leq C(M_{0}^{2}+\|(\rho_{0},u_{0},\mathbb{F}_{0}-\mathbb{I})\|_{H^{2}}^{2})(1+t)^{C_{0}\delta}.
\end{align}
From $(\ref{3.43})$, we get
\begin{align}\label{3.44}
\int_{0}^{t}(1+\tau)^{\sigma-1}\|(u,\mathcal{G})(\tau)\|_{L^{2}}^{2}d\tau&\leq C_{1}(M_{0}^{2}+\|(\rho_{0},u_{0},\mathbb{F}_{0}-\mathbb{I})\|_{H^{2}}^{2})\int_{0}^{t}(1+\tau)^{C_{0}\delta+\sigma-1-\sigma_{0}}d\tau\nonumber \\
&\leq C(M_{0}^{2}+\|(\rho_{0},u_{0},\mathbb{F}_{0}-\mathbb{I})\|_{H^{2}}^{2}),
\end{align}
where $C_{0}\delta<\sigma_{0}-\sigma$.\\
Similar to $(\ref{3.38})$, multiplying both sides of $(\ref{3.15})$ and $(\ref{3.20})$ by the time weight $(1+\tau)^{\sigma}$, and integrating the results over $(0,t)$ respectively and considering (\ref{3.8}), we finally derive that
\begin{align}\label{3.45}
&(1+t)^{\sigma}\|(u,\mathcal{G})(t)\|_{H^{2}}^{2}+\int_{0}^{t}(1+\tau)^{\sigma}\left(\|\Lambda u(\tau)\|_{H^{2}}^{2}+\|\Lambda \mathcal{G}(\tau)\|_{H^{1}}^{2}\right)d\tau\nonumber \\
&\lesssim\|(\rho_{0},u_{0},\mathbb{F}_{0}-\mathbb{I})\|_{H^{2}}^{2}+\int_{0}^{t}(1+\tau)^{\sigma-1}\|(u,\mathcal{G})(\tau)\|_{L^{2}}^{2}d\tau.
\end{align}
From $(\ref{3.44})$ and $(\ref{3.45})$, we immediately get (\ref{3.29}). This completes the proof of Lemma \ref{le3.3}.
\end{proof}

\begin{Remark}
According to Lemma \ref{le3.3}, the term $\int_{0}^{t}\|\nabla u(\tau)\|_{H^{2}}d\tau$ can be bounded. The details will be provided in the proof of next lemma, which aims to close the priori assumption in $(\ref{3.4})$.
\end{Remark}

\begin{Lemma}\label{le3.4}
Under the assumptions in Proposition \ref{p3.1}, then there exists a sufficiently small constant $\delta>0$ such that
\begin{equation}\label{3.46}
\|(\rho,u,\mathbb{F}-\mathbb{I})\|_{H^{2}}\lesssim\|(\rho_{0},u_{0},\mathbb{F}_{0}-\mathbb{I})\|_{H^{2}}+\|(\rho_{0},u_{0},\mathbb{F}_{0}-\mathbb{I})\|_{H^{2}}^{\frac{\sigma-1}{2\sigma}}M_{0}^{\frac{\sigma+1}{2\sigma}}.
\end{equation}
\end{Lemma}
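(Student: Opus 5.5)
Lemma \ref{le3.2} reduces everything to bounding $\int_0^t\|\nabla u(\tau)\|_{H^2}\,d\tau$. Write $\mathcal{E}_0:=\|(\rho_{0},u_{0},\mathbb{F}_{0}-\mathbb{I})\|_{H^{2}}$. The plan is to split this time integral at a threshold $T_*$ and to use the two available estimates on the two pieces:
\begin{itemize}
\item Lemma \ref{le3.1}, whose unweighted dissipation bound is of size $\mathcal{E}_0^2$, on the initial interval;
\item Lemma \ref{le3.3}, whose weighted bound $\int_0^t(1+\tau)^{\sigma}\|\Lambda u\|_{H^2}^2d\tau\lesssim M_0^2+\mathcal{E}_0^2$ with $\sigma>1$, on the tail.
\end{itemize}
Note that $\|\nabla u\|_{H^2}\sim\|\Lambda u\|_{H^2}$ by Plancherel.

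On $[0,T_*]$, Cauchy--Schwarz and (\ref{3.8}) give
\[
\int_0^{T_*}\|\nabla u\|_{H^2}\,d\tau\le T_*^{1/2}\Big(\int_0^{T_*}\|\Lambda u\|_{H^2}^2\,d\tau\Big)^{1/2}\lesssim T_*^{1/2}\mathcal{E}_0 .
\]
On $[T_*,t]$, Cauchy--Schwarz against the weight gives
\[
\int_{T_*}^t\|\nabla u\|_{H^2}\,d\tau\le\Big(\int_{T_*}^\infty(1+\tau)^{-\sigma}d\tau\Big)^{1/2}\Big(\int_0^t(1+\tau)^{\sigma}\|\Lambda u\|_{H^2}^2\,d\tau\Big)^{1/2}\lesssim (1+T_*)^{\frac{1-\sigma}{2}}\,(M_0+\mathcal{E}_0).
\]
The condition $\sigma>1$ is exactly what makes the tail weight integrable.

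Next we optimize in $T_*$, assuming $M_0\gtrsim\mathcal{E}_0$; otherwise take $T_*=0$ and the bound is just $\mathcal{E}_0$. Balancing $T_*^{1/2}\mathcal{E}_0$ against $T_*^{(1-\sigma)/2}M_0$ gives $T_*^{\sigma/2}=M_0/\mathcal{E}_0$. This yields
\[
\int_0^t\|\nabla u\|_{H^2}\,d\tau\lesssim \mathcal{E}_0+\mathcal{E}_0^{\frac{\sigma-1}{\sigma}}M_0^{\frac1\sigma}.
\]

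This is where I expect the main obstacle: the exponents produced this way are $\frac{\sigma-1}{\sigma},\frac1\sigma$, not the claimed $\frac{\sigma-1}{2\sigma},\frac{\sigma+1}{2\sigma}$. To match the statement I would first try an alternative early-time bound: replace $T_*^{1/2}\mathcal{E}_0$ by the mixed estimate $T_*^{1/2}\mathcal{E}_0^{1/2}(M_0+\mathcal{E}_0)^{1/2}$. This comes from interpolating the dissipation bounds of Lemma \ref{le3.1} and Lemma \ref{le3.3} (via $\sqrt{ab}$ of the two right-hand sides). Rebalancing then gives $T_*^{\sigma/2}=(M_0/\mathcal{E}_0)^{1/2}$ and the product $\mathcal{E}_0^{\frac{\sigma-1}{2\sigma}}M_0^{\frac{\sigma+1}{2\sigma}}$. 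In either version, the essential feature for closing is a positive power of the small quantity $\mathcal{E}_0$ multiplying a power of $M_0$, where $M_0$ need not be small.

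Finally, insert the bound into (\ref{3.21}). The term $\sup\|(\rho,\mathbb{F}-\mathbb{I})\|_{H^2}^2\int_0^t\|\nabla u\|_{H^2}d\tau$ is absorbed into the left-hand side once $\int_0^t\|\nabla u\|_{H^2}d\tau$ is small. That smallness holds for $\delta$ small, since it is a positive power of $\mathcal{E}_0\le\delta$ times a fixed power of $M_0$. The term $(\int_0^t\|\nabla u\|_{H^2})^2$ contributes the square of the bound above. Taking square roots, and adding $\|u\|_{H^2}$ from (\ref{3.8}) (which is $\lesssim\mathcal{E}_0$), gives (\ref{3.46}).
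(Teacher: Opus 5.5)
Your argument is correct, and it reaches the key bound (\ref{3.47}) by a genuinely different route from the paper. The paper does not split in time. It writes $\|\Lambda u\|_{H^2}=(1+\tau)^{-\frac{\sigma+1}{4}}\cdot(1+\tau)^{\frac{\sigma+1}{4}}\|\Lambda u\|_{H^2}$ and applies Cauchy--Schwarz; the weight $(1+\tau)^{-\frac{\sigma+1}{2}}$ is integrable exactly because $\sigma>1$. It then interpolates the intermediate weight $(1+\tau)^{\frac{\sigma+1}{2}}$ between the unweighted dissipation (\ref{3.8}) and the $\sigma$-weighted dissipation (\ref{3.29}), using H\"older with exponents $\frac{2\sigma}{\sigma+1},\frac{2\sigma}{\sigma-1}$. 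This yields $\mathcal{E}_0^{\frac{\sigma-1}{2\sigma}}(M_0+\mathcal{E}_0)^{\frac{\sigma+1}{2\sigma}}$ in one line, with no case distinction. Your splitting at an optimized $T_*$ is actually sharper, and the ``obstacle'' you flag is not one. For $\mathcal{E}_0\le M_0$,
\[
\mathcal{E}_0^{\frac{\sigma-1}{\sigma}}M_0^{\frac1\sigma}=\Big(\frac{\mathcal{E}_0}{M_0}\Big)^{\frac{\sigma-1}{2\sigma}}\mathcal{E}_0^{\frac{\sigma-1}{2\sigma}}M_0^{\frac{\sigma+1}{2\sigma}}\le\mathcal{E}_0^{\frac{\sigma-1}{2\sigma}}M_0^{\frac{\sigma+1}{2\sigma}},
\]
so your first bound already implies (\ref{3.46}), since the statement is only an upper bound. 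Your closing step then goes through unchanged, with an even smaller factor (a higher power of $\delta$) in the absorption. Your ``mixed'' early-time estimate is also valid, because $\int_0^{T_*}\|\Lambda u\|_{H^2}^2d\tau\le\min\{\mathcal{E}_0^2,\,M_0^2+\mathcal{E}_0^2\}$. It reproduces the paper's exponents exactly, but it deliberately discards information and is unnecessary. In short, the paper's interpolation buys brevity. Your splitting buys the optimal exponent given only the two dissipation bounds: a profile constant on $[0,T_*]$ saturates it. It also makes transparent that $\sigma>1$ is precisely what makes the tail weight integrable.
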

\begin{proof}
Considering $(\ref{3.8})$ and $(\ref{3.29})$, making use of H\"{o}lder's inequality, we get
\begin{align}\label{3.47}
\int_{0}^{t}\|\Lambda u(\tau)\|_{H^{2}}d\tau&\lesssim \left(\int_{0}^{t}(1+\tau)^{-\frac{\sigma+1}{2}}d\tau\right)^{\frac{1}{2}}\left(\int_{0}^{t}(1+\tau)^{\sigma}\|\Lambda u(\tau)\|_{H^{2}}^{2}d\tau\right)^{\frac{\sigma+1}{4\sigma}}\left(\int_{0}^{t}\|\Lambda u(\tau)\|_{H^{2}}^{2}d\tau\right)^{\frac{\sigma-1}{4\sigma}}\nonumber \\
&\lesssim\|(\rho_{0},u_{0},\mathbb{F}_{0}-\mathbb{I})\|_{H^{2}}+\|(\rho_{0},u_{0},\mathbb{F}_{0}-\mathbb{I})\|_{H^{2}}^{\frac{\sigma-1}{2\sigma}}M_{0}^{\frac{\sigma+1}{2\sigma}}.
\end{align}
From Lemma \ref{le3.1} and Lemma \ref{le3.2}, $(\ref{0.7})$ and $(\ref{3.47})$, we arrive at
\begin{align}\label{3.48}
\|(\rho,\mathbb{F}-\mathbb{I})(t)\|_{H^{2}}^{2}&\lesssim\sup_{0\leq \tau\leq t}\|(\rho,\mathbb{F}-\mathbb{I})(\tau)\|_{H^{2}}^{2}\int_{0}^{t}\|\nabla u(\tau)\|_{H^{2}}d\tau+\left(\int_{0}^{t}\|\nabla u(\tau)\|_{H^{2}}d\tau\right)^{2}\nonumber \\
&\ \ \ \ +\|(\rho_{0},\mathbb{F}_{0}-\mathbb{I})\|_{H^{2}}^{2}\nonumber \\
&\lesssim\left(\delta+M_{0}^{\frac{\sigma+1}{2\sigma}}\delta^{\frac{\sigma-1}{2\sigma}}\right)\sup_{0\leq \tau\leq t}\|(\rho,\mathbb{F}-\mathbb{I})(\tau)\|_{H^{2}}^{2}+\|(\rho_{0},u_{0},\mathbb{F}_{0}-\mathbb{I})\|_{H^{2}}^{2}\nonumber \\
&\ \ \ \ +\|(\rho_{0},u_{0},\mathbb{F}_{0}-\mathbb{I})\|_{H^{2}}^{\frac{\sigma-1}{\sigma}}M_{0}^{\frac{\sigma+1}{\sigma}}.
\end{align}
Finally, we get (\ref{3.46}) by selecting $\delta$ small enough. 
\end{proof}

Combining the results of Lemma \ref{le3.3} and Lemma \ref{le3.4}, we can immediately obtain $(\ref{3.3})$. Thus, we have completed the proof of Proposition \ref{p3.1}.

\section*{Acknowledgements}

This work was partially supported by National Key R\&D Program of China (No. 2021YFA1002900), Guangzhou City Basic and Applied Basic Research Fund (No. 2024A04J6336), Yunnan Fundamental Research Projects (grant No. 202501AU070056).

\bigskip

{\bf Data Availability:} Data sharing is not applicable to this article.

\bigskip

{\bf Conflict of interest:} This work does not have any conflicts of interest.

\end{document}